\def\o{\omega}
\def\si{\sigma}
\def\Si{\Sigma}
\def\th{\theta}
\def\Th{\Theta}
\def\O{\Omega}
\def\N{\mathbb{N}}
\def\T{\mathbb{T}}
\def\X{{\sf X}}
\def\D{{\mathcal D}}
\def\J{\mathrm J}
\def\R{\mathbb{R}}
\def\S{\mathcal S}
\def\m{{\sf m}}
\def\wm{\widehat{\sf m}}
\def\({\left(}
\def\[{\left[}
\def\){\right)}
\def\]{\right]}
\def\bu{{\bullet}}
\def\p{\parallel}
\def\<{\langle}
\def\>{\rangle}
\newtheorem{Theorem}{Theorem}[section]
\newtheorem{Remark}[Theorem]{Remark}
\newtheorem{Lemma}[Theorem]{Lemma}
\newtheorem{Corollary}[Theorem]{Corollary}
\newtheorem{Definition}[Theorem]{Definition}
\newtheorem{Example}[Theorem]{Example}
\numberwithin{equation}{section}
\begin{document}


\title{Gohberg Lemma and Spectral Results for\\ Pseudodifferential Operators\\ on Locally Compact Abelian Groups}

\date{\today}

\author{N\'estor Jara and Marius M\u antoiu \textdagger
\footnote{
\textbf{2010 Mathematics Subject Classification: Primary: 43A70; 47G30; Secondary 22C05; 47A63.}
\newline
\textbf{Key Words:}  locally compact group, pseudo-differential operator, symbol, spectrum, Gohberg lemma, $C^*$-algebra.
\newline
{
}}
}

\date{\small}

\maketitle \vspace{-1cm}


\begin{abstract}
We provide a new type of proof for known or new Gohberg lemmas for pseudodifferential operators on Abelian locally compact groups $\X$\,. We use $C^*$-algebraic techniques, which also give spectral results to which the Gohberg lemma is just a corollary. These results extend most of those appearing in the literature in various directions. In particular, compactness or a a Lie structure are not needed. The ideal of all the compact operators in ${\rm L}^2(\X)$ is replaced by all the ideals having a crossed product structure, which is a consistent generalization. We also indicate several new examples, mostly connected to specific behaviors of functions on the dual $\Xi$ of $\X$\,.  
\end{abstract}


\section{Introduction}\label{didirlish}

Roughly, a Gohberg-type lemma (called {\it $G$-lemma} from now on) is a lower bound for the distance of an operator ${\sf Op}(f)$\,, defined by a symbol $f$ and acting in a Hilbert space $\mathscr H$, to the ideal $\mathbf K(\mathscr H)$ of compact operators. The theory started long ago \cite{Go,Gr,Ho,KN,Se}, but recently it resurfaced as a consequence of certain progress in Harmonic Analysis \cite{DR,Man,MW,RVR,VR,VR1}. The new background is that of global pseudodifferential operators for groups that might be different from $\R^n$, as treated in \cite{RT1,RT2} (see also the references therein). At the beginning this concerned singular integral operators, but soon it was extended to pseudodifferential operators. The results mostly involved groups $\X$\,, with $\mathscr H={\rm L}^2(\X)$\,, \cite{RVR} being a notable exception. Part of the interest of $G$-lemmas is the fact that the mentioned lower bound, although referring to a highly non-commutative situation, is expressed directly on values taken by the symbol, more precisely on its asymptotic behavior at infinity in a certain sense.

\smallskip
This article contributes to a different and hopefully interesting point of view upon $G$-lemmas. The approach is through $C^*$-algebras and the basic tool is the simple observation that spectra of operators or of elements in $C^*$-algebras are invariant by applying monomorphisms. Analytical arguments are not needed. The $G$-lemma will then be a consequence and in fact one will get an equality instead of a lower bound, which is an improvement. 

\smallskip
Reaching a situation in which such a principle can be applied needs studying the connection between pseudodifferential operators on Abelian locally compact groups (in our case) and $C^*$-algebras. Those appearing here are {\it crossed products} involving {\it the action} $\Th$ of the dual group $\Xi:=\widehat\X$ on Abelian algebras of functions defined on $\Xi$\,. These are basically composed of complex functions on $\Xi\!\times\!\Xi$\,, but a partial Fourier transformation in the first variable converts them in functions $f:\X\!\times\!\Xi\to\mathbb C$\,; we show that the pseudodifferential calculus ${\sf Op}$ can be applied on such ``symbols". A priori, this does not make use of H\"ormander-type conditions, which are actually not defined in many cases. When they are available, as a rule, the conditions emerging from the algebraic framework are more general and do not make use of an infinity of conditions on derivatives. See however Remark \ref{frustiuk}, making the comparison with Grushin's treatment of the case $\X=\R^n$.

\smallskip
The formalism allows to deal with a large class of algebras, coding the behavior of the symbol at infinity, as well as ideals which may be different from $\mathbf K\big[{\rm L}^2(\X)\big]$ (this one will be labeled as {\it the standard case}). One obstacle in finding the right arguments is the fact that the mentioned isomorphisms only takes place in suitable quotients of $C^*$-algebras, which are not naturally represented in ${\rm L}^2(\X)$ or in some related Hilbert space.

\smallskip
We sketch briefly the content of the paper.

\smallskip
A description of the framework is provided in Section \ref{brocart}.

\smallskip
In Section \ref{bulgaria} we start with a (non-unital) $C^*$-subalgebra $\J$ of bounded uniformly continuous functions on $\Xi$ which is invariant under translations and contains all the functions that decay at infinity. One calls such $C^*$-subalgebra {\it admissible}. To it one canonically attaches a unital Abelian $C^*$-algebra $\widetilde\J$\,, also invariant under translations, by the condition $\Th_\zeta(\psi)-\psi\in\J$\,, $\forall\,\zeta\in\Xi$\,. One obtains that $\J$ is now an ideal of $\widetilde{\J}$\,. Examples in Section \ref{adunate} show that, typically, $\widetilde\J$ is much larger than $\J$\,. Elements of $\widetilde\J$ also show up as continuous function on $\widetilde\Si$\,, its Gelfand spectrum, a compactification of $\Xi$ on which the action of $\Xi$ on itself by translations extends continuously. The Gelfand spectrum of the ideal $\J$ may be seen as an invariant open subset of $\widetilde\Si$\,, containing $\Xi$\,. The couple $\big(\,\widetilde\J,\widetilde\Si\,\big)$ models the allowed behavior of the symbol $\X\!\times\!\Xi\ni(x,\xi)\to f(x,\xi)$ in the variable $\xi$\,, while $(\J,\Si)$ defines the behavior that we want to ``factor out". An important role is played by the quotient $C^*$-algebra $\J^\bu\!:=\widetilde \J/\J$, its Gelfand spectrum $\Si^\bu\!=\widetilde\Si\!\setminus\!\Si$ and the topology of $\widetilde\Si$ around $\Si^\bu$.

\smallskip
The presence of the couple of variables $(x,\xi)$\,, belonging to the ``phase space" $\X\!\times\!\Xi$\,, is at the root of the non-commutativity and the sophistication of the pseudodifferential calculus. In Section \ref{frogart} we show how one ``dilates" the constructions from the preceding section to this setting. As a first step, every invariant $C^*$-algebra $\mathrm A$ of bounded continuous functions on $\Xi$ generates a non-Abelian crossed product $C^*$-algebra $\Xi\!\rtimes\!\mathrm A$\,. Its product mixes in a convenient way the convolution in the group $\Xi$ with the action $\Th$ of $\Xi$ on $\mathrm A$\,. A dense $^*$-subalgebra of $\Xi\!\rtimes\!\mathrm A$ is ${\rm L}^1(\Xi;\mathrm A)$\,. This well-known construction \cite{Wi} will be applied both to $\mathrm A=\J$ and to $\mathrm A=\widetilde\J$\,. A partial Fourier transformation will then provide a couple formed of a $C^*$-algebra $\widetilde{\mathfrak B}$ and an ideal $\mathfrak B$\,. Remarkably, the pseudodifferential prescription maps isomorphically $\widetilde{\mathfrak B}$ into the operator algebra $\widetilde{\mathbf B}\!:={\sf Op}\big(\widetilde{\mathfrak B}\big)$\,, containing the ideal $\mathbf B\!:={\sf Op}(\mathfrak B)$\,. The later is also an ideal in the entire $\mathbf B\big[{\rm L}^2(\X)\big]$ precisely when $\J=\mathrm C_0(\Xi)$\,, implying $\mathbf B=\mathbf K\big[{\rm L}^2(\X)\big]$\,. The important quotient $\mathbf B^\bu\!:=\widetilde{\mathbf B}/\mathbf B$ can be viewed as a generalization of the Calkin algebra.

\smallskip
In Section \ref{froggy} we prove our main results, Theorem \ref{dagorlat} (an identification of the spectrum of the image of ${\sf Op}(f)$ in $\mathbf B^\bu$) and Corollary \ref{gondolin} (a $G$-lemma). For suitable functions $f:\X\!\times\!\Xi\to\mathbb C$ we define the operator in $\mathscr H:={\rm L}^2(\X)$ 
\begin{equation}\label{zurzurel}
\begin{aligned}
\big[{\sf Op}(f)u\big](x):&=\int_\X\int_\Xi\xi\big(xy^{-1}\big)f(x,\xi)u(y)dyd\xi\\
&=\int_\Xi\xi(x)f(x,\xi)\widehat u(\xi)d\xi
\end{aligned}
\end{equation}
(actually it can be defined in various situations, with various interpretations; we shall not go into details).
It is known that in most cases the values of the symbol are difficult to be converted sharply in properties of the operator. Under our conditions, however, the spectrum of the image of ${\sf Op}(f)$ in the quotient $\mathbf B^\bu$ is equal to the set of values taken by $f$  ``at infinity", i.e. on the set $\X_\infty\!\times\!\Si^\bu$, where $\X_\infty$ is the one-point  compactification of $\X$\,.
Such a result is possible since the action $\Th$ extended to $\widetilde\Si$ and then restricted to $\Si^\bu$ is trivial, being composed only of fixed points. This allows one of the critical steps in the sequence of isomorphisms from Step 2 of the proof. In Theorem \ref{dagorlat}, besides the mentioned statement, one also exhibit two other equivalent forms of the spectrum, involving neighborhoods of $\Si^\bu$ in the compact space $\widetilde\Si$ and their traces on the group $\Xi$\,, bringing the information at the level of values taken by the symbol on the  ``finite part" $\X\!\times\!\Xi$\,. Corollary \ref{gondolin} is our $G$-lemma, deduced from the Theorem by passing through the spectral radius; the $C^*$-algebras appearing in \eqref{sofia} are Abelian, so their elements are normal. This is the reason for having an equality in our $G$-lemma  and not only a lower bound.

\smallskip
We dedicate Section \ref{stindarde} to an important situation, that we call {\it the standard case}. The starting admissible $C^*$-subalgebra is the smallest one $\J_{\rm st}\equiv\textrm C_0(\Xi)$\,, giving rise at the non-commutative level to the ideal of compact operators. In this situation, the compactification $\widetilde\Si$ is just the disjoint union between $\Xi$ and $\Si^\bu$, which allows a more concrete presentation of the results, making use of the natural asymptotic behavior of the symbol outside large compact sets. There is a price to pay, equalities being now replaced by set inclusions or lower bounds in the $G$-lemma. The result is compared to a statement in \cite{Gr},

\smallskip
The examples of Section \ref{adunate} have various degrees of generality. We present concrete examples of admissible $C^*$-subalgebras $\J$, which are also ideals of the $C^*$-algebra of all uniformly continuous bounded functions. Elements of the larger algebra $\widetilde\J$ also appear, sometimes in an explicit form. 

\smallskip
The reference \cite{Man} has some similitudes with the present article. $C^*$-algebras appear in the main statements, as well as general ideals. However, the analytical  proofs are much closer to the classical arguments used to prove $G$-lemmas in other situations. As a consequence, \cite{Man} is only able to treat {\it compact} Abelian groups (those for which $\Xi$ is discrete). This is an important limitation. In addition, a spectral result is missing and the $G$-lemma only consists of a lower bound. 

\smallskip
An expert in topology could notice that filters might have been used systematically all over the presentation. We preferred not to do so, but they appear occasionally in a proof in Section \ref{stindarde}.

\smallskip
The commutativity of the group is a serious restriction. In \cite{DR} non-commutative groups appear. They have to be compact Lie groups. The sophistication of the pseudodifferential calculus in this case \cite{RT1} makes its way in the statement of the $G$-lemma. It would be interesting to find a treatment for locally compact groups which are neither Lie and compact, nor Abelian.

\section{Framework and conventions}\label{brocart}

We start with some notations and conventions that will be used all over the paper. If $\mathscr H$ is a complex Hilbert space, $\mathbf B(\mathscr H)$ will denote the $C^*$-algebra of all bounded linear operators acting in $\mathscr H$. The ideal of compact operators will be denoted by $\mathbf K(\mathscr H)$\,. Our locally compact spaces will always be Hausdorff. For such a space $Z$ we set ${\rm BC}(Z)$ for the space of all bounded and continuous complex functions defined on $Z$. Those which are uniformly continuous (whenever this makes sense) are elements of ${\rm BC_u}(Z)$ and those which tend to zero at infinity (outside arbitrarily large compact sets) are elements of  ${\rm C}_0(Z)$\,. One also encounters ${\rm C}_0(Z,\mathfrak Y)$\,, where $\big(\mathfrak Y,\star,^\star,\p\!\cdot\!\p_\mathfrak Y\!\big)$ is a $C^*$-algebra, with norm
$$
\p\!f\!\p_{{\rm C}_0(Z,\mathfrak Y)}\,:=\sup_{z\in Z}\p\!f(z)\!\p_\mathfrak Y\,,
$$
involution $f^*(\cdot):=f(\cdot)^\star$ and product $(fg)(\cdot):=f(\cdot)\star g(\cdot)$\,.

\smallskip
By {\it ideal} in a $C^*$-algebra we mean a two-sided closed ideal which is stable under the involution. {\it Morphisms} between $C^*$-algebras (in particular Hilbert space representations) are assumed by definition to intertwine the involutions.  For two $C^*$-algebras $\mathfrak A_1$ and $\mathfrak A_2$ we write $\mathfrak A_1\!\ll\mathfrak A_2$ if  $\mathfrak A_1$ is a $C^*$-subalgebra of $\mathfrak A_2$ and $\mathfrak A_1\!\lll\mathfrak A_2$ if $\mathfrak A_1$ is an ideal of $\mathfrak A_2$\ (with the convention above). With previous notations one has
\begin{equation*}\label{trol}
\mathbf K(\mathscr H)\lll\mathbf B(\mathscr H)\,,\quad{\rm BC_u}(Z)\ll{\rm BC}(Z)\,,\quad{\rm C}_0(Z)\lll{\rm BC}(Z)\,.
\end{equation*}

Let $\X$ be a locally compact Abelian group, with (Abelian) Pontryagin dual $\widehat\X\equiv \Xi$. Both group laws will be denoted multiplicatively. To make things interesting, we will assume that $\Xi$ is {\it not compact}, i.\,e. $\X$ is not discrete. However, note that $\X$ is allowed to be compact. 

\smallskip
On $\X$ and $\Xi$ we consider correlated Haar measures $d\m(x)\equiv dx$ and $d\wm(\xi)\equiv d\xi$\,. The Fourier transformation $\mathcal F:{\rm L}^1(\X)\to{\rm C}_0(\Xi)$ can also be defined as a unitary operator $\mathcal F:{\rm L}^2(\X)\to{\rm L}^2(\Xi)$ and it is essentially given by 
\begin{equation*}\label{glaurung}
(\mathcal F\varphi)(\xi):=\int_\X\overline{\xi(x)}\varphi(x)dx\,,
\end{equation*}
with inverse
\begin{equation*}\label{morgoth}
\big(\mathcal F^{-1}\psi\big)(x):=\int_{\Xi}\xi(x)\psi(\xi)d\xi\,.
\end{equation*}

The Riemann-Lebesgue Lemma and the fact that $\X$ is isomorphic to the dual of $\Xi$ (by the Pontryagin-van Kampen Theorem) also insures the interpretation $\mathcal F^{-1}\!:{\rm L}^1(\Xi)\to{\rm C}_0(\X)$ (linear contraction). Actually it extends to an isomorphism between ${\sf C}^*(\Xi)$\,, the group $C^*$-algebra of $\Xi$\,, and ${\rm C}_0(\X)$\,.

\smallskip
We denote by $\th:\Xi\to{\rm Homeo}(\beta\Xi)$ the canonical  extension to $\beta\Xi$\,, the Stone-Ce\u ch compactification of $\Xi$\,,  of the action of $\Xi$ on itself by translations:
\begin{equation*}\label{dwarf}
\th_\eta(\xi):=\xi\eta\,,\quad\forall\,\xi,\eta\in\Xi\,,
\end{equation*} 
which lifts to an action on ${\rm BC}(\Xi)$ by 
\begin{equation*}\label{melkor}
\big[\Th_\eta(\psi)\big](\xi):=\psi\big[\th_{\eta}(\xi)\big]=\psi(\xi\eta)\,,\quad\forall\,\xi,\eta\in\Xi\,.
\end{equation*}

This one is pointwise continuous in $\eta$ only on uniformly continuous functions.

\section{Abelian ideals and $C^*$-algebras}\label{bulgaria}

In this section we define suitable pairs of $C^*$-algebras of bounded uniformly continuous functions on $\Xi$\,, one of them being an ideal of the other. They are invariant under translations and their quotient has interesting dynamical properties.

\begin{Definition}\label{macedonia}
The $C^*$-subalgebra $\mathrm J(\Xi)$ of $\,\mathrm{BC_u}(\Xi)$ is called {\rm admissible} if (a) it contains $\mathrm C_0(\Xi)$ and (b) it is invariant under translations: if $\psi\in\mathrm J(\Xi)$ and $\eta\in\Xi$ then $\Th_\eta(\psi)\in\mathrm J(\Xi)$\,.
\end{Definition}

\begin{Definition}\label{serbia}
An admissible $C^*$-subalgebra $\mathrm J(\Xi)$ being given, we define 
\begin{equation*}\label{muntenegro}
\widetilde{\mathrm J}(\Xi):=\big\{\psi\in\mathrm{BC_u}(\Xi)\,\big\vert\,\Th_\zeta(\psi)-\psi\in\mathrm J(\Xi)\,,\ \forall\,\zeta\in\Xi\big\}\,.
\end{equation*}
\end{Definition}

When $\Xi$ is understood or irrelevant, we just write $\J$ and $\widetilde{\J}$ respectively. It is easy to show that $\widetilde{\mathrm J}$ is a unital $C^*$-algebra of uniformly continuous functions. It is also invariant under translations: for every $\psi\in\widetilde{\mathrm J}$ and every $\eta\in\Xi$ one has $\Theta_\eta(\psi)\in\widetilde{\mathrm J}$\,. We have
\begin{equation*}\label{skopjie}
\J_{\rm st}:=\mathrm C_0\lll\J\lll\widetilde\J\gg\widetilde\J_{\rm st}\ggg\mathrm C_0\,,
\end{equation*}
so both $\mathrm C_0$ and $\J$ are ideals in $\widetilde\J$\,. The label ${\rm st}$ refers to what will be called subsequently {\it the standard case}, since it is the $C^*$-subalgebra (and ideal) usually considered for Gohberg lemmas \cite{Gr}. Even in non-standard cases, as in \cite{Man}, the spaces of symbols are usually ideals of $\mathrm{BC_u}$, but for our construction a $C^*$-subalgebra is enough. Note, however, that if $\J$ is unital, then $\widetilde{\J}=\J$ and the construction becomes trivial. Because of this, we restrict to the case where $\J$ is non-unital, thus a proper ideal of $\widetilde{\J}$. A particular case appears when $\J$ is a (proper) ideal of $\mathrm{BC_u}$\,. 
 
\smallskip
The Gelfand spectrum $\widetilde{\Si}$ of $\widetilde\J$ is a regular compactification of $\Xi$ (i.\,e.\! we can see $\Xi$ as an open dense subset of $\widetilde{\Si}\,$)\,.  A neighborhood base of $\sigma\in \widetilde{\Si}$  is given by
\begin{equation}\label{surub}
\left\{W(\sigma|\epsilon;\varphi_1,\dots,\varphi_m)\, \,\big\vert\, \,\epsilon>0\,,\,\varphi_1,\dots,\varphi_m\in \widetilde{\J}\,,\,m\in \mathbb{N}\right\},
\end{equation}
where
\begin{equation}\label{capitan}
W(\sigma|\epsilon;\varphi_1,\dots,\varphi_m):=\left\{\nu\in \widetilde{\Si}\,\big| \,\,|\nu(\varphi_j)-\sigma(\varphi_j)|<\epsilon\,, \,\forall\,j=1,\dots,m\right\}.
\end{equation}

 Since $\widetilde\J$ is invariant, the action of $\Xi$ on itself by left translations extends to a continuous action on $\widetilde{\Si}$\,,  also denoted by $\th$. Note that $\widetilde{\Si}$ is a quotient of $\beta \Xi$ and this action on $\widetilde\Si$ is compatible with the previous $\th$ defined on $\beta \Xi$ through the canonical surjection. The Gelfand isomorphism $\widetilde\J\cong\mathrm C\big(\widetilde{\Si}\big)$ consists here in associating to any function belonging to $\widetilde{\J}$ its unique continuous extension to $\widetilde{\Si}$\,, denoted by $\widetilde{\psi}$\,.

\smallskip
The locally compact space  $\Si$\,, the Gelfand spectrum of $\J$\,, will not be needed explicitly. Note, however, that by the relation ${\rm C}_0\lll\J$\,, one can identify $\Xi$ with one of its open subsets; we may thus write
\begin{equation*}\label{simurg}
\Xi\subset\Si\subset\widetilde{\Si}\,.
\end{equation*} 

Let us denote by $\Si^\bu\!:=\widetilde{\Si}\!\setminus\!\Si$ the corresponding {\it corona set}, which is compact since $\Si$ is open 
in $\widetilde{\Si}$\,. It is (homeomorphic to) the Gelfand spectrum of the quotient $C^*$-algebra $\J^\bu\!:=\widetilde{\J}/\J$\,. We recall that $\J^\bu$ can be seen as the set of characters of $\widetilde\J$ which are trivial on the ideal $\J$\,.  Extending terminology used for $\J=\mathrm C_0$ \cite{Ro}\,, we could call $\widetilde{\Si}$ {\it the $\J$-Higson compactification} and $\Si^\bu$ {\it the $\J$-Higson corona}. When $\J\ne\mathrm C_0$\,, there is an extra compact space $\widetilde{\Si}\!\setminus\!\Xi$ containing strictly ${\Si}^\bu$.

\smallskip
We are interested now in the dynamical properties of this quotient ${\J}^\bu\!=\widetilde\J/\J\cong\mathrm C\big({\Si}^\bu\big) $\,. 
If $\zeta\in\Xi$ and $\si\in{\Si}^\bu$, for every $\psi\in\widetilde\J$ one has
\begin{equation*}\label{tirana}
\big(\th_\zeta(\si)-\si\big)(\psi)=\si\big(\Th_\zeta(\psi)-\psi\big)=0\,,
\end{equation*}
since $\Th_\zeta(\psi)-\psi\in\J$\,. This shows that
\begin{equation*}\label{atena}
\th_\zeta(\si)=\si\,,\quad\forall\,\si\in\Si^\bu,\,\zeta\in\Xi\,.
\end{equation*}

This will be crucial for our arguments, so we make a formal statement:

\begin{Lemma}\label{tinguiririca}
The elements of $\,{\Si}^\bu$ are all fixed points under the action $\th$\,. 
\end{Lemma}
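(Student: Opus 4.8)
The plan is to work entirely at the level of characters, using the Gelfand picture already set up in this section. Recall that $\Si^\bu$ is the Gelfand spectrum of $\J^\bu=\widetilde\J/\J$, so a point $\si\in\Si^\bu$ may be regarded as a character of $\widetilde\J$ that annihilates the ideal $\J$; this is the only property of $\si$ I will use. The action $\th$ extended to $\widetilde\Si$ is the one dual to $\Th$, meaning $[\th_\zeta(\si)](\psi)=\si(\Th_\zeta\psi)$ for every $\psi\in\widetilde\J$ and $\zeta\in\Xi$\,. One reads this convention off by testing it on the evaluation characters $\widehat\xi(\psi):=\psi(\xi)$, where it reduces to the translation identity $\th_\zeta(\xi)=\xi\zeta$ that defines the original action on $\Xi$\,.

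With this in hand, a single computation settles everything. Fix $\si\in\Si^\bu$ and $\zeta\in\Xi$\,, and evaluate the two characters $\th_\zeta(\si)$ and $\si$ on an arbitrary $\psi\in\widetilde\J$:
\begin{equation*}
[\th_\zeta(\si)](\psi)-\si(\psi)=\si(\Th_\zeta\psi)-\si(\psi)=\si(\Th_\zeta\psi-\psi).
\end{equation*}
By the very definition of $\widetilde\J$ (Definition \ref{serbia}), the element $\Th_\zeta\psi-\psi$ lies in $\J$\,; and since $\si$ is trivial on $\J$\,, the right-hand side vanishes. As $\psi$ was arbitrary and a character of $\widetilde\J$ is determined by its values on all of $\widetilde\J$\,, this forces $\th_\zeta(\si)=\si$\,. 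Because $\zeta\in\Xi$ was arbitrary as well, $\si$ is a fixed point, which is the assertion of the Lemma.

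I do not expect a genuine obstacle here: the statement is a direct algebraic consequence of the defining property of $\widetilde\J$ combined with $\si$ annihilating $\J$\,, and no analytic or continuity argument intervenes. The only points meriting a line of care are (i) recording the correct convention for the dual action $\th$ on characters, so that the telescoping difference $\si(\Th_\zeta\psi-\psi)$ actually appears, and (ii) observing that the same computation simultaneously guarantees that $\Si^\bu$ is invariant under $\th$ (so that $\th_\zeta(\si)$ indeed remains in $\Si^\bu$), since it establishes the stronger \emph{pointwise-fixed} property directly rather than mere set invariance. The continuity of the extended action on $\widetilde\Si$ is needed only to justify speaking of $\th$ on $\widetilde\Si$ at all, not for the fixed-point conclusion itself.
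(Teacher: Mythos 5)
Your argument is correct and coincides with the paper's own proof: both evaluate $\th_\zeta(\si)-\si$ on an arbitrary $\psi\in\widetilde\J$, reduce it to $\si(\Th_\zeta\psi-\psi)$, and conclude from $\Th_\zeta\psi-\psi\in\J$ together with $\si$ annihilating $\J$. Your additional remarks on the convention for the dual action and on invariance are accurate but not needed beyond what the paper records.
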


\section{Algebras of pseudifferential operators on Abelian compact groups}\label{frogart}

Relying on the couple $\big(\,\widetilde \J,\J\big)$\,, we are going to introduce $C^*$-algebras of symbols and connect them to the ${\sf Op}$-calculus via Hilbert space representations. We will make use below, sometimes implicitly, of the known monomorphisms (each time, the first two are isomorphisms)
\begin{equation*}\label{laba}
{\rm C}_0(\X)\!\otimes\!\widetilde \J\cong {\rm C}_0(\X)\!\otimes\!{\rm C}\big(\,\widetilde{\Si}\,\big)\cong {\rm C}_0\big(\X\!\times\!\widetilde{\Si}\,\big)\,\hookrightarrow {\rm C}(\X_\infty\!\times\! \widetilde{\Si})\,,
\end{equation*}
\begin{equation}\label{labuta}
{\rm C}_0(\X)\!\otimes\!\J^\bu\cong {\rm C}_0(\X)\!\otimes\!{\rm C}\big(\Si^\bu\big)\cong {\rm C}_0\big(\X\!\times\!\Si^\bu\big)\,\hookrightarrow {\rm C}\big(\X_\infty\!\times\!{\Si^\bu}\big)\,,
\end{equation}
where $\X_\infty$ denotes the Alexandrov compactification of $\X$ if it is not compact and $\X$ if it is.

\smallskip
With the action $\Th$ one can form the crossed product $\widetilde{\mathfrak J}:=\Xi\!\ltimes\!\widetilde \J$\,.  It is the enveloping $C^*$-algebra of the Banach $^*$-algebra ${\rm L}^1\big(\Xi;\widetilde \J\,\big)$ (isomorphic as a Banach space to the projective tensor product ${\rm L}^1\big(\Xi)\overline\otimes \,\widetilde \J$\,\big)\,, with the obvious norm, the composition law
\begin{equation*}\label{sauron}
(\Phi\diamond\Psi)(\xi):=\int_{\Xi}\Th_\eta\big[\Phi(\xi\eta^{-1})\big]\Psi(\eta)d\eta
\end{equation*}
and the involution
\begin{equation*}\label{mordor}
\Psi^\diamond(\xi):=\Th_\xi\big[\Psi(\xi^{-1})^*\big]\,.
\end{equation*}

Using notations as $\big[\Psi(\xi)\big](\zeta)\equiv\Psi(\xi,\zeta)$\,, the algebraic structure of ${\rm L}^1\big(\Xi;\widetilde \J\,\big)$ can be reformulated as 
\begin{equation*}\label{ungoliant}
(\Phi\diamond\Psi)(\xi,\zeta):=\int_{\Xi}\Phi\big(\xi\eta^{-1}\!,\zeta\eta\big)\Psi(\eta,\zeta)d\eta\,,
\end{equation*}
\begin{equation*}\label{angband}
\Psi^\diamond(\xi,\zeta):=\overline{\Psi\big(\xi^{-1}\!,\zeta\xi\big)}\,.
\end{equation*}

With the invariant ideal $\J$ one can make the same crossed product construction, leading to the $C^*$-algebra $\mathfrak J\!:=\Xi\!\ltimes\!\J$\,. In the following we will use the abbreviations $\widetilde{\mathfrak L}:={\rm L}^1\big(\Xi;\widetilde \J\,\big)$ and $\mathfrak L:={\rm L}^1\big(\Xi;\J\big)$. It is known that the obvious inclusion $\mathfrak L\subset\widetilde{\mathfrak L}$ extends to a monomorphism between the corresponding crossed products, the first one being an ideal of the second, thus finally $\mathfrak J\lll\widetilde{\mathfrak J}$\,.

\smallskip
There is a natural (Schr\"odinger) representation of $\widetilde{\mathfrak J}$ in the Hilbert space ${\rm L}^2(\Xi)$ given for $\Psi\in \widetilde{\mathfrak L}$ by
\begin{equation*}\label{haradrim}
\big[{\sf Sch}(\Psi)w\big](\xi):=\int_{\Xi}\Psi\big(\xi\eta^{-1}\!,\eta\big)w(\eta)d\eta\,, \hspace{.4cm}\forall\, w\in {\rm L}^2(\Xi)\,,\, \xi\in \Xi\,.
\end{equation*}

It is clear that ${\sf Sch}$ also transforms isomorphically the Hilbert space ${\rm L}^2(\Xi\!\times\!\Xi)$ into the Hilbert space $\mathbf B^2\big[{\rm L}^2(\Xi)\big]$ of all Hilbert-Schmidt operators in ${\rm L}^2(\Xi)$\,.

\smallskip
Most elements of $\widetilde{\mathfrak J}$ are functions (of a certain type)  defined on $\Xi\!\times\!\Xi$\,. But the symbols of pseudodifferential operators are naturally defined on $\X\!\times\!\Xi$\,. We will turn to this situation using a partial Fourier transformation in the first variable: for suitable $g:\X\!\times\!\Xi\to \mathbb{C}$\,, we consider $\mathbb F_{(1)} g:\Xi\!\times\!\Xi\to \mathbb{C}$ by
\begin{equation*}\label{paraleu}
\big(\mathbb F_{(1)} g\big)(\eta,\xi):=\int_\X\overline{\eta(x)}g(x,\xi)dx\,,
\end{equation*} 
which in some sense is the tensor product between $\mathcal F$ and ${\sf id}$\,. Our construction is described basically by
\begin{equation*}\label{katerini}
\mathfrak B:=\mathbb F_{(1)}^{-1}\,\mathfrak J\,,\quad\widetilde{\mathfrak B}:=\mathbb F_{(1)}^{-1}\,\widetilde{\mathfrak J}\,.
\end{equation*}

What actually happens is that $\mathbb F_{(1)}^{-1}$ maps linearly, contractively and densely (but not multiplicatively) $\widetilde{\mathfrak L}$ into ${\rm C}_0(\X)\!\otimes\!\widetilde \J\cong {\rm C}_0\big(\X\!\times\!\widetilde{\Si}\,\big)$ (an easy extension of the Riemann-Lebesgue Lemma for Abelian locally compact groups). It also extends to the full crossed product $\widetilde{\mathfrak J}$\,: On the ${\rm L}^1$-level, one pushes forward the crossed product algebraic structure to the dense image of $\mathbb F_{(1)}^{-1}$\,, getting a (non-commutative) Banach $^*$-algebra. This structure is not needed in an explicit form (the involution is simply complex conjugation while the product is already a certain type of symbol composition). Then $\widetilde{\mathfrak B}$ is the enveloping $C^*$-algebra and the inverse partial Fourier transformation extends to an isomorphism of $C^*$-algebras. The same arguments leads to $\mathfrak B$ and one gets $\mathfrak B\lll\widetilde{\mathfrak B}$\,. For subsequent use, we mention below part of the conclusion:
\begin{equation}\label{tronuri}
\mathbb F_{(1)}^{-1}\,\widetilde{\mathfrak L}\subset\widetilde{\mathfrak B}\cap {\rm C}_0\big(\X\!\times\!\widetilde{\Si}\,\big)\,.
\end{equation}

For suitable functions $f:\X\!\times\!\Xi\to\mathbb C$  we define the operator in $\mathscr H\!:={\rm L}^2(\X)$ (or in other function spaces, when the structure of $\X$ allows it)
\begin{equation*}\label{balrog}
\begin{aligned}
\big[{\sf Op}(f)u\big](x):&=\int_\X\int_{\Xi}\xi\big(xy^{-1}\big)f(x,\xi)u(y)dyd\xi\,.
\end{aligned}
\end{equation*}
It is an integral operator with kernel $\kappa_f(x,y):=\big(\mathbb F_{(2)}^{-1}f\big)\big(x,xy^{-1}\big)$\,, where $\mathbb F_{(2)}$ is the Fourier transformation in the second variable. Studying this kernel, it follows easily that ${\sf Op}$ is a Hilbert space isomorphism between ${\rm L}^2(\X\!\times\!\Xi)\cong {\rm L}^2(\X)\!\otimes\!{\rm L}^2(\Xi)$ and the space $\mathbf B^2(\mathcal{H})$ of all Hilbert-Schmidt operators in $\mathscr H$. 

\smallskip
We indicate now another $C^*$-algebraic meaning, making the connection with the above crossed products.
One makes use of the unitary equivalence
\begin{equation*}\label{busuioc}
\mathbf U_\mathcal F:\mathbf B[{\rm L}^2(\Xi)]\to\mathbf B(\mathscr{H})\,,\quad \mathbf U_\mathcal F(T):=\mathcal F^{-1} T\mathcal F.
\end{equation*}

The basic fact  is that {\it we can regard ${\sf Op}$ as a representation of the $C^*$-algebra} $\widetilde{\mathfrak B}=\mathbb F^{-1}_{(1)}\,\widetilde{\mathfrak J}$ (and other similar Fourier transformed crossed products) in the Hilbert space $\mathscr{H}$ and that the diagram
\begin{equation}\label{bigorneau}
\begin{diagram}
\node{\widetilde{\mathfrak J}}\arrow{e,t}{\mathbb F^{-1}_{(1)}} \arrow{s,l}{{\sf Sch}}\node{\widetilde{\mathfrak B}}\arrow{s,r}{{\sf Op}}\\ 
\node{\mathbf B[{\rm L}^2(\Xi)]} \arrow{e,t}{\mathbf U_\mathcal F} \node{\mathbf B(\mathscr{H})}
\end{diagram}
\end{equation}
commutes. The horizontal arrows are (isometric) isomorphisms and the vertical ones are injective representations of $C^*$-algebras.
The fact that the diagram commutes is a straightforward computation on $\widetilde{\mathfrak L}\subset\widetilde{\mathfrak J}$ followed by an obvious density argument. The final conclusion is that
\begin{equation*}\label{numenor}
{\sf Op}=\mathbf U_\mathcal F\circ{\sf Sch}\circ\mathbb F_{(1)}\,.
\end{equation*}

\section{A spectral result and a G-lemma}\label{froggy}

To formulate our main results, one must introduce operator versions of the previously defined $C^*$-algebras. We set
\begin{equation}\label{mithrandir}
\mathbf{B}\!:={\sf Op}\big(\mathfrak B)\lll\widetilde{\mathbf{B}}:={\sf Op}\big(\widetilde{\mathfrak B}\big)\ll\mathbf B(\mathscr H)\,.
\end{equation}

The pair formed of the $C^*$-algebra $\widetilde{\mathbf{B}}$ and its ideal $\mathbf{B}$ will play a central role, as well as the quotient $\mathbf{B}^\bu\!:=\widetilde{\mathbf{B}}/\mathbf{B}$\,. We set ${\sf Op}^\bu(f)\!:={\sf Op}(f)+\mathbf{B}$ for the image of ${\sf Op}(f)$ in the quotient. One denotes by ${\rm sp}_\mathfrak A(T)$ the {\it spectrum} of an element $T$ in a $C^*$-algebra $\mathfrak A$ and by $\rho_\mathfrak A(T)$ its {\it spectral radius}. 

\smallskip
For $f\in \mathbb F_{(1)}^{-1}\,\widetilde{\mathfrak L}$\,, which is a function with domain $\X\!\times\! \Xi$\,, there is a unique continuous extension to $\X\!\times\! \widetilde{\Si}$\, (see also \eqref{tronuri}). We call this extension $\tilde{f}$ and write $f^\bu$ for its restriction to $\X\!\times\!{\Si}^\bu$.   Moreover, for such $f$, there is a unique continuous extension to $\X_\infty\!\times\! \Xi$ (see (\ref{labuta})), where $\X_\infty\!:=\X\cup\{x_\infty\}$ denotes the Alexandrov compactification of $\X$ if it is not compact and $\X_\infty\!:=\X$ otherwise. We call this extension $f_\infty$\,; it satisfies $f_\infty(x_\infty,\xi)=0$ for every $\xi\in\Xi$\,. Then one analogously defines $\widetilde{f}_\infty$ on $\X_\infty\!\times\!\widetilde{\Si}$ and $f^\bu_\infty$ on $\X_\infty\!\times\!\Si^\bu$. 

\begin{Theorem}\label{dagorlat}
Let $f\in\mathbb F_{(1)}^{-1}\,\widetilde{\mathfrak L}\subset \mathrm C_0\big(\X\!\times\!\widetilde{\Si}\big)$\,. Then 
\begin{equation}\label{gollum}
{\rm sp}_{{\mathbf B}^\bu}\big({\sf Op}^\bu(f)\big)=f^\bu_\infty\big(\X_\infty\!\times \!{\Si}^\bu\big)\,.
\end{equation}
\end{Theorem}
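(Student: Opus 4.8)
The plan is to identify the quotient $C^*$-algebra $\mathbf B^\bu$ with a \emph{commutative} algebra of functions on $\X\!\times\!\Si^\bu$ through a chain of $^*$-isomorphisms, and then to read off the spectrum of ${\sf Op}^\bu(f)$ as the closed range of its Gelfand transform. Since a $^*$-isomorphism of $C^*$-algebras preserves the spectrum of every element, no analytic estimate enters: the entire content lies in building the chain and tracking the class of $f$ along it. This realizes the philosophy announced in the introduction, where spectral invariance under monomorphisms replaces the classical hard analysis.

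First I would descend the isomorphism ${\sf Op}\colon\widetilde{\mathfrak B}\to\widetilde{\mathbf B}$ from diagram \eqref{bigorneau} to the quotients: since ${\sf Op}$ carries $\mathfrak B$ onto $\mathbf B$, it induces $\widetilde{\mathfrak B}/\mathfrak B\cong\widetilde{\mathbf B}/\mathbf B=\mathbf B^\bu$. Pulling back along the partial Fourier isomorphism $\mathbb F_{(1)}^{-1}$ gives $\mathbf B^\bu\cong\widetilde{\mathfrak J}/\mathfrak J$. Next I would invoke the exactness of the crossed-product functor applied to the invariant ideal $\J\lll\widetilde{\J}$ (valid here since the enveloping/full crossed product is used, and moreover $\Xi$ is abelian hence amenable, so full and reduced agree), yielding $\widetilde{\mathfrak J}/\mathfrak J=(\Xi\!\ltimes\!\widetilde{\J})/(\Xi\!\ltimes\!\J)\cong\Xi\!\ltimes\!(\widetilde{\J}/\J)=\Xi\!\ltimes\!\J^\bu$. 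The decisive step then uses Lemma \ref{tinguiririca}: because $\th$ restricted to $\Si^\bu$ is trivial, the $\diamond$-product collapses to convolution in $\Xi$ tensored with multiplication in $\J^\bu$ and $\Psi^\diamond$ collapses to conjugation, so $\Xi\!\ltimes\!\J^\bu\cong{\sf C}^*(\Xi)\otimes\mathrm C\big(\Si^\bu\big)$. Combining with the isomorphism ${\sf C}^*(\Xi)\cong\mathrm C_0(\X)$ recalled in Section \ref{brocart} and with \eqref{labuta} produces $\mathbf B^\bu\cong\mathrm C_0(\X)\otimes\mathrm C\big(\Si^\bu\big)\cong\mathrm C_0\big(\X\!\times\!\Si^\bu\big)$; in particular $\mathbf B^\bu$ is abelian, which is precisely what makes the ensuing $G$-lemma an equality rather than a bound.

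It remains to compute the image of ${\sf Op}^\bu(f)$ under this chain. Writing $\Psi:=\mathbb F_{(1)}f\in\widetilde{\mathfrak L}$, the quotient map sends $\Psi$ to $\eta\mapsto\big[\Psi(\eta)\bmod\J\big]\in\mathrm C\big(\Si^\bu\big)$, and applying the group-algebra Fourier isomorphism in the $\Xi$-variable recovers $(x,\sigma)\mapsto\int_\Xi\eta(x)\,\widetilde{\Psi(\eta)}(\sigma)\,d\eta$. By \eqref{tronuri} this is exactly the continuous extension $\tilde f$ restricted to $\X\!\times\!\Si^\bu$, that is, the function $f^\bu$. Hence ${\sf Op}^\bu(f)$ corresponds to $f^\bu\in\mathrm C_0\big(\X\!\times\!\Si^\bu\big)$, and its spectrum in this commutative algebra is $\overline{f^\bu(\X\!\times\!\Si^\bu)}$, augmented by $\{0\}$ coming from the unitization when $\X$ is non-compact. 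Since $f^\bu_\infty$ is the continuous extension of $f^\bu$ to the compact space $\X_\infty\!\times\!\Si^\bu$ with $f^\bu_\infty(x_\infty,\cdot)\equiv0$, one checks $f^\bu_\infty\big(\X_\infty\!\times\!\Si^\bu\big)=\overline{f^\bu(\X\!\times\!\Si^\bu)}\cup\{0\}$, giving \eqref{gollum}; the compact-$\X$ case ($\X_\infty=\X$) is the unital version of the same identity.

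The main obstacle I anticipate is not any individual isomorphism but the verification underlying the third paragraph: that the trivialization of the action, the group-algebra Fourier transform, and the partial transform $\mathbb F_{(1)}$ compose to send the class of $f$ to $f^\bu$ itself, and not to some convolved or twisted relative. One must check these identifications on the dense $^*$-subalgebra $\widetilde{\mathfrak L}$ and transport them to the enveloping $C^*$-algebras by density and continuity, and one must confirm that the collapse of the crossed product into a tensor product is legitimate at the algebra level and not merely pointwise on characters — which is exactly the role of Lemma \ref{tinguiririca}.
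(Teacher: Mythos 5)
Your proposal is correct and follows essentially the same route as the paper's proof: the identical chain $\mathbf B^\bu\cong\widetilde{\mathfrak J}/\mathfrak J\cong\Xi\ltimes(\widetilde\J/\J)\cong{\sf C}^*(\Xi)\otimes\mathrm C(\Si^\bu)\cong\mathrm C_0(\X\times\Si^\bu)$, with Lemma \ref{tinguiririca} playing the same decisive role and the class of $f$ tracked to $f^\bu$ in the same way. The only cosmetic difference is that you compute the spectrum in the non-unital algebra $\mathrm C_0(\X\times\Si^\bu)$ as $\overline{f^\bu(\X\times\Si^\bu)}\cup\{0\}$, whereas the paper first embeds into the unital $\mathrm C(\X_\infty\times\Si^\bu)$ and reads off the image of $f^\bu_\infty$; these are equivalent.
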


\begin{proof}
{\it Step 1.}  It follows from (\ref{mithrandir}) that the asignation 
$$
f+\mathfrak B\mapsto{\sf Op}^\bu(f)={\sf Op}(f)+\mathbf B
$$ 
provides the isomorphism ${\mathfrak B}^\bu\!=\widetilde{\mathfrak B}/\mathfrak B\cong\widetilde{\mathbf B}/\mathbf B ={\mathbf B}^\bu$\,. Thus
\begin{equation}\label{morfologic}
{\rm sp}_{{\mathbf B}^\bu}\!\big({{\sf Op}^\bu}(f)\big)={\rm sp}_{{\mathfrak B}^\bu}\!\big(f+\mathfrak B\big)\,,
\end{equation}
so one has to show that the two r.\,h.\,s. of \eqref{gollum} and \eqref{morfologic} agree.

\smallskip
{\it Step 2.} As a consequence of the above preparations and of some general facts that will be explained, one has the sequence of $C^*$-morphisms
\begin{equation}\label{sofia}
\begin{aligned}
\mathbf B^\bu\!\cong\widetilde{\mathfrak B}/\mathfrak B&\overset{(a)}{\cong}\widetilde{\mathfrak J}/\mathfrak J\overset{(b)}{\cong}\Xi\!\ltimes\!\big(\widetilde \J/\J\big)\overset{(c)}{\cong}\Xi\!\ltimes\!\mathrm C\big({\Si}^\bu\big)\\
&\overset{(d)}{\cong} {\sf C}^*(\Xi)\!\otimes \!{\rm C}\big({\Si}^\bu\big)\overset{(e)}{\cong}{\rm C}_0(\X)\!\otimes\! {\rm C}\big({\Si}^\bu\big)\\
&\overset{(f)}{\cong} {\rm C}_0\big(\X\!\times\!\Si^\bu\big)\overset{(g)}{\hookrightarrow}{\rm C}\big(\X_\infty\!\times\!\Si^\bu\big)\,,
\end{aligned}
\end{equation}
which is the core of the proof. At any step $(y)$ we set $\Gamma_y$ for the corresponding morphism; all except $\Gamma_g$ are in fact isomorphisms. Remarkably, the quotient $\mathbf B^\bu$ turns out to be Abelian. 

\smallskip
First, $\Gamma_a$ is obtained by passing to the quotient the isomorphism $\mathbb F_{(1)}:\widetilde{\mathfrak B}\to\widetilde{\mathfrak J}$\,, which also maps $\mathfrak B$ onto $\mathfrak J$\,. So 
\begin{equation*}\label{gamaa}
\Gamma_a\big(f+\mathfrak B\big):=\mathbb F_{(1)}f+\mathfrak J\,.
\end{equation*}

\smallskip
The isomorphism $\Gamma_b:\Xi\!\ltimes\!\widetilde{\mathrm J}/\Xi\!\ltimes\!\J\!\to\Xi\!\ltimes\!\big(\widetilde \J/\J\big)$ is a basic known fact in the theory of crossed products; these ones provide exact functors (see \cite[Proposition 3.19]{Wi}): The action $\Th$ on $\widetilde \J$ restricts to an action on the invariant ideal $\J$ and this defines an obvious action on the quotient. There are corresponding crossed products (in which we do not indicate the action at the level of notations). If $\Phi$ belongs to the dense subspace ${\rm C}_{\rm comp}\big(\Xi,\widetilde{\J}\,\big)$\,, one has
\begin{equation*}\label{sopron}
\big[\Gamma_b\big(\Phi+\mathfrak J\big)\big](\xi)=\Phi(\xi)+\J\,,\quad\forall\,\xi\in\Xi\,.
\end{equation*}

The isomorphism $\Gamma_c$ arises from the isomorphism $\mathscr{G}:\widetilde{\J}/\J\to{\rm C}(\Si^{\bu})\,,\ \mathscr{G}(\psi+\J)=\psi^\bu$, which is a matter of Gelfand theory. Then, this isomorphism lifts to an isomorphism on the crossed products (see \cite[Lemma 2.65]{Wi}). Explicitly 
$$
\big[\Gamma_c(\Psi)\big](\xi):=\mathscr{G}\big[\Psi(\xi)\big]\,.
$$

Thus, until now, for every $\xi\in\Xi$ and $\si\in{\Si}^\bu$ we get
\begin{equation}\label{gamacba}
\big[\Gamma_c\Gamma_b\Gamma_a(f+\mathfrak B)\big](\xi,\si)=\big[(\mathbb F_{(1)}f)(\xi)\big]^\bu(\si)\equiv\big(\mathbb F_{(1)}f\big)^\bu(\xi,\si)\,.
\end{equation}

\smallskip
The critical point is the fourth isomorphism: Based on the fact that the action of $\Xi$ on $\Si^\bu$ (and thus on ${\rm C}\big({\Si}^\bu\,\big)$\,) is trivial (see Lemma \ref{tinguiririca}), it follows by \cite[Lemma\,2.73]{Wi} that the corresponding crossed product is isomorphic to the tensor product of ${\rm C}\big({\Si}^\bu\,\big)$ with the group $C^*$-algebra ${\sf C}^*(\Xi)$\,. 

\smallskip
The fifth isomorphism follows immediately from the fact that ${\sf C}^*(\Xi)$ is isomorphic to ${\rm C}_0(\X)$ via an extension of the inverse Fourier transform $\mathcal F^{-1}\!:{\rm L}^1(\Xi)\to {\rm C}_0(\X)\equiv {\rm C}_0\big(\widehat\Xi\big)$\,, as indicated in \cite[Proposition\,3.1]{Wi}. This extends then to the tensor products.

\smallskip
The final morphisms $\Gamma_f$ and $\Gamma_g$ are a standard facts in the theory of Abelian $C^*$-algebras; see \eqref{labuta}. One gets
\begin{equation}\label{gamafed}
\big[\Gamma_g\Gamma_f\Gamma_e\Gamma_d(\Phi)\big](x,\si)=\big(\mathbb F_{(1)}^{-1}\Phi\big)^\bu_\infty\,(x,\si)\,,\quad\forall\,x\in\X_\infty\,,\,\si\in\Si^\bu.
\end{equation}

Let us set $\Gamma\!:=\Gamma_g\Gamma_f\Gamma_e\Gamma_d\Gamma_c\Gamma_b\Gamma_a$\,. Combining \eqref{gamacba} with \eqref{gamafed}, in which the extension-restriction indicated by $\,^\bu\,$\  only refers to the second variable, we obtain finally
\begin{equation*}\label{vormula}
 \left[\Gamma\!\left(f+\mathfrak B_{\mathfrak G}\right)\right](x,\sigma)=f^\bu_\infty(x,\sigma)\,,\hspace{.4cm}\forall\,(x,\sigma)\in \X_\infty\!\times\!{\Si}^\bu.  
\end{equation*}

\smallskip
{\it Step 3.} Note that the spectrum of $f^\bu_\infty$ in ${\rm C}\big(\X_\infty\!\times\!\Si^\bu\big)$ is simply its image, since it has a compact domain. Now, by applying Steps 1 and 2 and the obvious fact that spectra are invariant under injective $C^*$-morphisms, we finish the proof of the theorem.
\end{proof}

We are going now to rephrase \eqref{gollum} in terms of values taken by $f$ only on the phase space $\X\!\times\!\Xi$\,. For any base of neighborhoods $\mathcal B$ of $\Si^\bu$ in $\widetilde\Si$ we set $\mathcal B|_\Xi\!:=\{B\cap\Xi\!\mid\! B\in\mathcal B\}$ (by the density of $\Xi$ in $\widetilde\Si$\,, each such ``trace" $B\cap\Xi$ is non-void). Examples are $\mathcal B\equiv\mathcal V$, the family of all the neighborhoods, $\mathcal B\equiv\mathcal N$, the family of all the closed (thus compact) neighborhoods, or $\mathcal B\equiv\mathcal W$, the family of the neighborhoods of $\Si^\bu$ deduced from the collection \eqref{surub} ($W$ is a neighborhood of a set if and only if it is a union of neighborhoods of all of its points). 

\begin{Corollary}\label{dragorlat}
Let $f\in\mathbb F_{(1)}^{-1}\,\widetilde{\mathfrak L}\subset \mathrm C_0\big(\X\!\times\!\widetilde{\Si}\big)$ and let $\mathcal B$ be a base of neighborhoods of $\,\Si^\bu$ in $\widetilde\Si$\,. Then 
\begin{eqnarray}\label{golumm}
{\rm sp}_{{\mathbf B}^\bu}\big({\sf Op}^\bu(f)\big)&=&\bigcap_{A\in \mathcal B|_\Xi}\overline{f\big(\X\!\times\!A\big)}\,\nonumber\\
&=&\big\{\lambda\in \mathbb{C}\,\big\vert\, \forall\,\epsilon>0\,, A\in \mathcal B|_\Xi\,,\,\exists\,(x,\xi)\in \X\!\times \!A: |f(x,\xi)-\lambda|<\epsilon\big\}\,.
\end{eqnarray}
\end{Corollary}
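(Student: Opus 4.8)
The plan is to take Theorem \ref{dagorlat} as the starting point: it already identifies ${\rm sp}_{\mathbf B^\bu}\big({\sf Op}^\bu(f)\big)$ with the image $f^\bu_\infty\big(\X_\infty\!\times\!\Si^\bu\big)$, so the Corollary reduces to the purely topological identity
$$
f^\bu_\infty\big(\X_\infty\!\times\!\Si^\bu\big)=\bigcap_{A\in\mathcal B|_\Xi}\overline{f\big(\X\!\times\!A\big)}\,.
$$
The second equality in \eqref{golumm} is then nothing but the definition of closure in $\mathbb C$: a scalar $\lambda$ lies in $\overline{f(\X\!\times\!A)}$ exactly when $f$ takes values within $\epsilon$ of $\lambda$ on $\X\!\times\!A$ for every $\epsilon>0$, and intersecting over $A$ yields the displayed quantifier description. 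For the main identity I would use two standing facts: first, that $f$ extends to $\widetilde f_\infty\in\mathrm C\big(\X_\infty\!\times\!\widetilde\Si\big)$ (continuous on a compact space, vanishing on $\{x_\infty\}\!\times\!\widetilde\Si$) with $f^\bu_\infty=\widetilde f_\infty$ restricted to $\X_\infty\!\times\!\Si^\bu$; and second, that $\X\!\times\!\Xi$ is dense in $\X_\infty\!\times\!\widetilde\Si$, since $\X$ is dense in $\X_\infty$ and $\Xi$ is dense in $\widetilde\Si$.

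For the inclusion ``$\subseteq$'': given $\lambda=\widetilde f_\infty(x_0,\sigma_0)$ with $(x_0,\sigma_0)\in\X_\infty\!\times\!\Si^\bu$ and given $A=B\cap\Xi$ with $B\in\mathcal B$, I would exploit continuity of $\widetilde f_\infty$ at $(x_0,\sigma_0)$ to produce a product neighborhood $U\!\times\!V$ on which $\big|\widetilde f_\infty-\lambda\big|<\epsilon$, chosen so that $V\subseteq\operatorname{int}(B)$ (legitimate because $B$ is a neighborhood of $\Si^\bu\ni\sigma_0$). Density then yields $x\in U\cap\X$ and $\xi\in V\cap\Xi\subseteq A$, and since $f(x,\xi)=\widetilde f_\infty(x,\xi)$ we get $|f(x,\xi)-\lambda|<\epsilon$. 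As $\epsilon$ is arbitrary this places $\lambda$ in $\overline{f(\X\!\times\!A)}$, and as $A$ is arbitrary, in the intersection.

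For the inclusion ``$\supseteq$'', which I expect to be the main obstacle, I would run a compactness (net) argument. Directing the index set of pairs $\alpha=(B,\epsilon)\in\mathcal B\times(0,\infty)$ by reverse inclusion in $B$ and decrease in $\epsilon$ — directedness using that $\mathcal B$ is a base of neighborhoods of $\Si^\bu$ — I pick, for each $\lambda$ in the intersection and each $\alpha$, a point $(x_\alpha,\xi_\alpha)\in\X\!\times\!(B\cap\Xi)$ with $|f(x_\alpha,\xi_\alpha)-\lambda|<\epsilon$. By compactness of $\X_\infty\!\times\!\widetilde\Si$ a subnet converges to some $(x_0,\sigma_0)$, and continuity of $\widetilde f_\infty$ together with $\epsilon_\alpha\to0$ forces $\widetilde f_\infty(x_0,\sigma_0)=\lambda$. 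The delicate point is to verify $\sigma_0\in\Si^\bu$: since $\xi_\alpha$ eventually lies in every fixed $B_0\in\mathcal B$, the limit satisfies $\sigma_0\in\bigcap_{B\in\mathcal B}\overline B$, so I must prove $\bigcap_{B\in\mathcal B}\overline B=\Si^\bu$. This is where normality of the compact Hausdorff space $\widetilde\Si$ enters: for $\sigma\notin\Si^\bu$ one separates the closed set $\Si^\bu$ from $\sigma$ by disjoint open sets $U\supseteq\Si^\bu$ and $W\ni\sigma$, then chooses $B\in\mathcal B$ with $B\subseteq U$, whence $\overline B\subseteq\widetilde\Si\!\setminus\!W\not\ni\sigma$. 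With $\sigma_0\in\Si^\bu$ secured, $\lambda=f^\bu_\infty(x_0,\sigma_0)$ lies in the image, completing the proof once Theorem \ref{dagorlat} is invoked.
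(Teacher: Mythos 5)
Your proposal is correct, and it reaches the same topological identity $f^\bu_\infty\big(\X_\infty\!\times\!\Si^\bu\big)=\bigcap_{A\in\mathcal B|_\Xi}\overline{f(\X\!\times\!A)}$ that the paper reduces to via Theorem \ref{dagorlat}, but the mechanism you use for it is genuinely different. The paper first observes that the identity is base-independent and switches to the base $\mathcal N$ of closed (compact) neighborhoods of $\Si^\bu$; it then notes that $\X_\infty\!\times\!N$ is the closure of $\X\!\times\!(N\cap\Xi)$ in $\X_\infty\!\times\!\widetilde\Si$ and that $\widetilde f_\infty$, being continuous on a compact space, is a closed map and hence commutes with closures, so each term $\overline{f(\X\!\times\!(N\cap\Xi))}$ is simply $\widetilde f_\infty(\X_\infty\!\times\!N)$; the remaining reverse inclusion is handled by covering $\X_\infty\!\times\!\Si^\bu$ with finitely many compact preimage neighborhoods avoiding a given $\lambda$. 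You instead keep the arbitrary base $\mathcal B$, prove ``$\subseteq$'' by density of $\X\!\times\!\Xi$ and continuity of $\widetilde f_\infty$, and prove ``$\supseteq$'' by a subnet compactness argument whose only delicate point --- that the limit character lands in $\Si^\bu$ --- you settle via the separation identity $\bigcap_{B\in\mathcal B}\overline{B}=\Si^\bu$ in the normal space $\widetilde\Si$; all of these steps check out (in particular your directed set is genuinely directed because $\mathcal B$ is a base, and a subnet is eventually beyond every index, so $\epsilon_\alpha\to 0$). What the paper's route buys is brevity: the closed-map observation collapses both inclusions into a single set identity, at the cost of the change-of-base step and the finite-cover argument. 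What your route buys is that it never leaves the given base and uses only the most generic compactness/density/separation facts, at the cost of handling nets and subnets explicitly.
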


\begin{proof}
The second equality follows by a direct use of basic definitions. By Theorem \ref{dagorlat}, we are left with showing that 
\begin{equation*}\label{shower}
f^\bu_\infty\big(\X_\infty\!\times \!{\Si}^\bu\big)=\bigcap_{A\in \mathcal B_\Xi}\overline{f\big(\X\!\times\!A\big)}\,.
\end{equation*}
It is also easy to see that if this is proven for one base $\mathcal B$\,, it will also hold for any other one. So we will work with $\mathcal B=\mathcal N$.

\smallskip
Let $N\in\mathcal N$; then $\X_\infty\!\times\!N$ is the closure of $\X\!\times\!(N\cap\Xi)$ in the compact space $\X_\infty\!\times\!\widetilde\Si$\,. The continuous function $\widetilde f_\infty:\X_\infty\!\times\!\widetilde\Si\to\mathbb C$ is also closed, so it commutes with closures. So we only need to show that
\begin{equation*}\label{jower}
f^\bu_\infty\big(\X_\infty\!\times \!{\Si}^\bu\big)=\bigcap_{N\in \mathcal N}\widetilde f_\infty\big(\X_\infty\!\times\!N\big)\,.
\end{equation*}

For each $(x,\sigma)\in \X_\infty\!\times\!{\Si}^\bu$ and $\lambda\neq f(x,\sigma)$\,, there is a compact neighborhood $V$ of $f(x,\sigma)$ with $\lambda\not\in V$. Then $M\!:=f^{-1}(V)$ is a compact neighborhood of the point  $(x,\sigma)$ and this one is not mapped into $\lambda$\,. Since $\X_\infty\!\times\!{\Si}^\bu$ is compact, finitely many of such $M$ can cover it, forming a compact neighborhood of $\X_\infty\!\times\!{\Si}^\bu$ which is not mapped into $\lambda$\,, thus the second equivalence below follows: 
\begin{eqnarray*}
\lambda\in f^\bu_\infty\big(\X_\infty\!\times \!{\Si}^\bu\big)=\widetilde{f}_\infty\big(\X_\infty\!\times \!{\Si}^\bu\big)    &\Leftrightarrow&\lambda= \widetilde{f}_\infty(x,\si)\,,\hspace{.2cm}\text{ for some }(x,\si)\in\X_\infty\!\times \!\Si^\bu\\
 &\Leftrightarrow&\lambda\in \widetilde{f}_\infty\big(\X_\infty\!\times \!N\big)\,,\hspace{.2cm}\forall\,N\in \mathcal{N}\\
 &\Leftrightarrow&\lambda\in \bigcap_{N\in \mathcal{N}}\widetilde{f}_\infty\big(\X_\infty\!\times \!N\big)\,,
\end{eqnarray*}
hence we finish the proof of the Corollary.
\end{proof}

\begin{Remark}\label{frustiuk} 
{\rm Note that whenever $\X$ is not compact, the point $\lambda=0$ is included in the spectrum.}
\end{Remark}

The next Corollary is a version of Gohberg's Lemma for our case; note however that it consists of equalities, while traditionally one only gets lower bounds for the norm. 
Given a filter base $\mathcal C$ on a topological space $Z$ and a bounded function $\varphi:Z\to \mathbb R$\,, we consider the following definition of {\it upper limit along $\mathcal C$}\,:
\begin{equation}\label{whatelse}
\begin{aligned}
\limsup_{\mathcal C}\varphi:&
=\sup\left\{\mu\in \mathbb R\,\,\big\vert \,\,\forall\, C\in \mathcal C\,,\ \exists\,z\in C\ {\rm such\ that}\ \varphi(z)>\mu\right\}\\
&=\max\bigcap_{C\in\mathcal C}\overline{\varphi(C)}\,.
\end{aligned}
\end{equation}
Note that for $\mathcal S\!:=\{K^c\,\vert \, K\text{ relatively compact}\}$ on a locally compact space (referred to as the {\it standard filter}), one obtains
the usual definition of the upper limit.

\smallskip
Apart from the neighborhood base $\mathcal{B}$\,, we need the following family of subsets of $\X\!\times\Xi$\,:
$$
\mathcal B'\!:=\big\{\X\!\times\!A\!\mid\! A\in\mathcal B|_\Xi\big\}\,.
$$

\begin{Corollary}\label{gondolin}
Let again $f\in\mathbb F_{(1)}^{-1}\widetilde{\mathfrak L}$ and let $\mathcal B$ be a base of neighborhoods of $\,\Si^\bu$ in $\widetilde{\Si}$\,. Then 
\begin{equation}\label{belegherer}
{\rm dist}\big({\sf Op}(f),\mathbf B\big)=\!\!\max_{(x,\si)\in\X_\infty\!\times\Si^\bu}\big\vert f^\bu_\infty(x,\si)\big\vert=\limsup_{\mathcal B'}\vert f\vert\,.
\end{equation}
\end{Corollary}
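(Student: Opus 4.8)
The plan is to derive this Gohberg-type lemma directly from the spectral identification already obtained in Theorem~\ref{dagorlat} and its geometric reformulation in Corollary~\ref{dragorlat}, exploiting the single crucial structural feature established in Step~2 of the theorem's proof: the quotient $\mathbf B^\bu$ is \emph{Abelian}. First I would record that, by the very definition of the quotient norm in the $C^*$-algebra $\widetilde{\mathbf B}/\mathbf B=\mathbf B^\bu$, one has
\begin{equation*}
{\rm dist}\big({\sf Op}(f),\mathbf B\big)=\big\p{\sf Op}^\bu(f)\big\p_{\mathbf B^\bu}\,,
\end{equation*}
so the left-hand side of \eqref{belegherer} is nothing but the norm of the image element in the Calkin-type quotient. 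This is the bridge that converts a distance-to-an-ideal problem into a norm computation inside $\mathbf B^\bu$.

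The heart of the argument is then the passage from norm to spectral radius. Because $\mathbf B^\bu$ is Abelian, every one of its elements is normal, and in a $C^*$-algebra the norm of a normal element equals its spectral radius:
\begin{equation*}
\big\p{\sf Op}^\bu(f)\big\p_{\mathbf B^\bu}=\rho_{\mathbf B^\bu}\big({\sf Op}^\bu(f)\big)=\sup\big\{|\lambda|\,\big\vert\,\lambda\in{\rm sp}_{\mathbf B^\bu}\big({\sf Op}^\bu(f)\big)\big\}\,.
\end{equation*}
Now I invoke Theorem~\ref{dagorlat}, which identifies this spectrum with $f^\bu_\infty\big(\X_\infty\!\times\!\Si^\bu\big)$. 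Since $\X_\infty\!\times\!\Si^\bu$ is compact and $f^\bu_\infty$ is continuous, the supremum of $|\lambda|$ over this image set is attained, yielding the middle term $\max_{(x,\si)\in\X_\infty\times\Si^\bu}\big|f^\bu_\infty(x,\si)\big|$ of \eqref{belegherer}. This is where the improvement over the classical lower-bound statements comes from: normality forces norm $=$ spectral radius exactly, not merely $\geq$.

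For the third equality I would feed Corollary~\ref{dragorlat} into the $\limsup$ notation of \eqref{whatelse}. Applying the modulus function and using that the map $\lambda\mapsto|\lambda|$ is continuous and commutes with the relevant suprema, the equality
\begin{equation*}
\max_{(x,\si)\in\X_\infty\times\Si^\bu}\big|f^\bu_\infty(x,\si)\big|=\max\bigcap_{A\in\mathcal B|_\Xi}\overline{f\big(\X\!\times\!A\big)}\cap\{\text{moduli}\}
\end{equation*}
reduces, by the second line of \eqref{whatelse} and the definition of $\mathcal B'$, to $\limsup_{\mathcal B'}|f|$. Concretely, Corollary~\ref{dragorlat} already shows that the spectrum equals $\bigcap_{A\in\mathcal B|_\Xi}\overline{f(\X\times A)}$, so the maximum modulus over the spectrum is the maximum modulus over this intersection of closures, which is precisely $\max\bigcap_{C\in\mathcal B'}\overline{|f|(C)}=\limsup_{\mathcal B'}|f|$ after noting $\mathcal B'=\{\X\times A\mid A\in\mathcal B|_\Xi\}$ and that modulus commutes with closure up to the obvious correspondence. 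One should verify that $\mathcal B'$ genuinely forms a filter base on $\X\!\times\!\Xi$ so that \eqref{whatelse} applies; this follows because $\mathcal B|_\Xi$ is a filter base on $\Xi$ (traces of a neighborhood base are nonempty by density of $\Xi$ in $\widetilde\Si$) and taking products with the fixed factor $\X$ preserves this.

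The step I expect to require the most care is the last one: reconciling the $\max$-over-spectrum expression with the filter-base $\limsup$ in \eqref{whatelse}, since one must check that the modulus function interacts correctly with the intersection of closures and that $\max\bigcap_{C\in\mathcal B'}\overline{|f|(C)}$ really coincides with $\max_{\lambda\in{\rm sp}}|\lambda|$. The subtlety is that $\overline{|f|(C)}$ and $|{\cdot}|$ applied to $\overline{f(C)}$ need not be literally identical for a single $C$, but their intersections over the filter base agree because the modulus is continuous and the spaces involved are such that the relevant closures behave well; once Corollary~\ref{dragorlat} supplies the spectrum as $\bigcap_{A}\overline{f(\X\times A)}$, the remaining identity is a routine property of continuous functions and filter-base limits, and no analytic estimate on ${\sf Op}(f)$ itself is ever needed.
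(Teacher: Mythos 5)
Your proposal is correct and follows essentially the same route as the paper: identify ${\rm dist}({\sf Op}(f),\mathbf B)$ with the quotient norm, use commutativity of $\mathbf B^\bu$ (hence normality) to replace the norm by the spectral radius, invoke Theorem~\ref{dagorlat} for the middle term, and deduce the last equality from Corollary~\ref{dragorlat} together with the second form of \eqref{whatelse}. Your extra care about how the modulus interacts with the intersection of closures (via continuity, compactness of the sets $\overline{f(\X\times A)}$, and the downward-directedness of the filter base) is a correct elaboration of what the paper dismisses as ``straightforward.''
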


\begin{proof}
We recall from the proof of Theorem \ref{dagorlat} that the $C^*$-algebra ${\mathbf B}^\bu$ is isomorphic to $\widetilde{\mathfrak B}/\mathfrak B$ and to ${\rm C}_0\big(\X\!\times\!\Si^\bu\big)$, thus it is commutative. The elements of these algebras being normal, their norm coincide with their spectral radius.
Thus one has 
\begin{equation*}\label{beleghier}
{\rm dist}\big({\sf Op}(f),\mathbf B\big)=\big\Vert\,{\sf Op}^\bu(f)\,\big\Vert_{{\mathbf B}^\bu}\!=\rho_{{\mathbf B}^\bu}\big({\sf Op}^\bu(f)\big)\,.
\end{equation*}
Hence, from Theorem \ref{dagorlat}:
\begin{equation*}\label{babolat}
\rho_{{\mathbf B}^\bu}\big({\sf Op}^\bu(f)\big)=
\!\!\max_{(x,\si)\in\X_\infty\!\times\Si^\bu}\big\vert f^\bu_\infty(x,\si)\big\vert\,,
\end{equation*}
and the first equality in \eqref{belegherer} is checked. 

\smallskip
The second one follows straightforwardly from \eqref{golumm} and the last form of the definition \eqref{whatelse}.
\end{proof}

\begin{Example}\label{sepavar}
{\rm Let us suppose that $f(x,\xi)=\gamma(x)\psi(\xi)$\,, which is often written in the form $f=\gamma\otimes\psi$\,. We assume that $\gamma\in\mathcal F^{-1}{\rm L}^1(\Xi)\subset \mathrm C_0(X)$ and $\psi\in\widetilde \J$\,. Then ${\sf Op}(f)={\sf M}_\gamma{\sf C}_{\widehat\psi}$ is the composition between a multiplication operator and the operator of convolution with the Fourier transform of $\psi$ and its spectrum can be very complicated. However, applying Theorem \ref{dagorlat}, the spectrum of its image in the quotient $\mathbf B^\bu$ is just the product in $\mathbb C$ of the subsets $\overline{\gamma(\X)}$ and $\psi^\bu\big(\Si^\bu\big)$\,, while the r.h.s. of \eqref{belegherer} becomes $\max|\gamma|\limsup_{\mathcal B}|\psi|$\,.}
\end{Example}

Let us finally consider the next situation, leading to an extension of our results: Besides the ideal $\J$ and all the objects constructed starting with it, let $\mathrm I$ be another invariant ideal of $\widetilde{\J}$, with Gelfand spectrum $\Si_\mathrm I$\,, such that $\mathrm C_0\subset \J\subset\mathrm I\subset \widetilde{\J}$\,. We are not interested in the $C^*$-algebra $\widetilde{\mathrm I}$ or the quotient $\mathrm I^\bu\!:=\widetilde{\mathrm I}/\mathrm I$\,. One shows easily the isomorphism
$$
\mathrm I^\circ\!:=\widetilde{\mathrm J}/\mathrm I\cong\big(\,\widetilde{\mathrm J}/\mathrm J\big)/\big(\mathrm I/\mathrm J\big)=\mathrm J^\bu/\big(\mathrm I/\mathrm J\big)\,.
$$

At the level of spectra, one has
$$
\Xi\subset\Si\subset\Si_\mathrm I\subset\widetilde\Si\,.
$$

The set difference $\widetilde\Si\!\setminus\!\Si_\mathrm I=:\Si^\circ\!\subset\Si^\bu$ may be seen as the Gelfand spectrum of $\mathrm I^\circ$; it is a compact invariant subset of $\widetilde\Si$\,, formed of fixed points. We also have the ideal $\mathfrak B_{\mathrm I}\lll\widetilde{\mathfrak B}$\,, obtained by applying the inverse Fourier transformation in the first variable to the crossed product $\Xi\!\rtimes\!\mathrm I$\,. The ${\sf Op}$-calculus sends it isomorphically to an ideal $\mathbf B_{\mathrm I}$ of $\widetilde{\mathbf B}$\,. We have 
$$
\mathbf K\big[\mathrm{L}^2(\X)\big]\subset\mathbf B\subset\mathbf B_{\mathrm I}\subset\widetilde{\mathbf B}\subset\mathbf B\big[\mathrm{L}^2(\X)\big]\,.
$$

The image of ${\sf Op}(f)$ into the quotient $\mathbf B^\circ\!:=\widetilde{\mathbf B}/\mathbf B_{\mathrm I}$ will be denoted by ${\sf Op}^\circ(f)$\,.

\begin{Theorem}\label{dragalit}
Let $f\in\mathbb F_{(1)}^{-1}\,\widetilde{\mathfrak L}\subset \mathrm C\big(\X\!\times\!\widetilde{\Si}\big)$\,. Then (with obvious notations)
\begin{eqnarray*}\label{gollumus}
{\rm sp}_{{\mathbf B}^\circ}\big({\sf Op}^\circ(f)\big)&=&f^\circ_\infty\big(\X_\infty\!\times \!{\Si}^\circ\big)\,.
\end{eqnarray*}
If $\,\mathcal D$ is a base of neighborhoods of $\,\Si^\circ$ in $\,\widetilde\Si$ and $\D'\!:=\{X\!\times\!(D\cap\Xi)\!\mid\!D\in\mathcal D\}$\,, then
\begin{equation}\label{biligherer}
{\rm dist}\big({\sf Op}(f),\mathbf B_\mathrm I\big)=\!\!\max_{(x,\si)\in\X_\infty\!\times\Si^\circ}\big\vert f^\circ_\infty(x,\si)\big\vert=\limsup_{\mathcal D'}\vert f\vert\,.
\end{equation}
\end{Theorem}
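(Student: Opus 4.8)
The plan is to run the machinery of Theorem~\ref{dagorlat} and Corollary~\ref{gondolin} essentially verbatim, with the ideal $\mathrm I$ playing the role that $\J$ played there and the smaller corona $\Si^\circ=\widetilde\Si\setminus\Si_{\mathrm I}$ replacing $\Si^\bu$. The structural inputs are exactly those already secured in the paragraphs preceding the statement: $\mathrm I$ is an invariant ideal of $\widetilde\J$ with $\mathrm C_0\subset\J\subset\mathrm I\subset\widetilde\J$, its Fourier-transformed crossed product $\mathfrak B_{\mathrm I}$ is an ideal of $\widetilde{\mathfrak B}$, and its $\mathsf{Op}$-image $\mathbf B_{\mathrm I}$ is an ideal of $\widetilde{\mathbf B}$. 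The one genuinely delicate ingredient, triviality of the action on the relevant corona, comes for free: since $\Si\subset\Si_{\mathrm I}$ we have $\Si^\circ\subset\Si^\bu$, so by Lemma~\ref{tinguiririca} every point of $\Si^\circ$ is fixed under $\th$.

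First I would reproduce the chain \eqref{sofia}. Passing the isomorphism $\mathbb F_{(1)}:\widetilde{\mathfrak B}\to\widetilde{\mathfrak J}$, which carries $\mathfrak B_{\mathrm I}$ onto $\mathfrak J_{\mathrm I}:=\Xi\!\ltimes\!\mathrm I$, to the quotient gives $\mathbf B^\circ\cong\widetilde{\mathfrak B}/\mathfrak B_{\mathrm I}\cong\widetilde{\mathfrak J}/\mathfrak J_{\mathrm I}$. Exactness of the crossed-product functor \cite[Proposition 3.19]{Wi} yields $\widetilde{\mathfrak J}/\mathfrak J_{\mathrm I}\cong\Xi\!\ltimes\!\big(\widetilde\J/\mathrm I\big)$, and lifting the Gelfand isomorphism $\widetilde\J/\mathrm I\cong\mathrm C(\Si^\circ)$ via \cite[Lemma 2.65]{Wi} gives $\Xi\!\ltimes\!\mathrm C(\Si^\circ)$. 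Because the action on $\Si^\circ$ is trivial, \cite[Lemma 2.73]{Wi} collapses this to ${\sf C}^*(\Xi)\!\otimes\!\mathrm C(\Si^\circ)$; then \cite[Proposition 3.1]{Wi} turns it into $\mathrm C_0(\X)\!\otimes\!\mathrm C(\Si^\circ)\cong\mathrm C_0\big(\X\!\times\!\Si^\circ\big)\hookrightarrow\mathrm C\big(\X_\infty\!\times\!\Si^\circ\big)$, as in \eqref{labuta}. Tracking the generator $f+\mathfrak B_{\mathrm I}$ through these maps exactly as in \eqref{gamacba} and \eqref{gamafed} identifies its image with $f^\circ_\infty$; since the final embedding is injective and $\X_\infty\!\times\!\Si^\circ$ is compact, the spectrum equals the image $f^\circ_\infty\big(\X_\infty\!\times\!\Si^\circ\big)$, which is the first displayed identity.

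For the distance formula I would argue as in Corollary~\ref{gondolin}. The chain just built shows $\mathbf B^\circ$ is commutative, hence its elements are normal and $\mathrm{dist}\big(\mathsf{Op}(f),\mathbf B_{\mathrm I}\big)=\big\Vert\,\mathsf{Op}^\circ(f)\,\big\Vert_{\mathbf B^\circ}=\rho_{\mathbf B^\circ}\big(\mathsf{Op}^\circ(f)\big)$, which by the spectral identity equals $\max\big\vert f^\circ_\infty\big\vert$ over $\X_\infty\!\times\!\Si^\circ$. For the last equality in \eqref{biligherer} I would first record the neighborhood reformulation, the analog of Corollary~\ref{dragorlat}: for a base $\mathcal D$ of neighborhoods of $\Si^\circ$ one has ${\rm sp}_{\mathbf B^\circ}\big(\mathsf{Op}^\circ(f)\big)=\bigcap_{D\in\mathcal D}\overline{f\big(\X\!\times\!(D\cap\Xi)\big)}$. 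Its proof is the same closed-map/compactness argument, using that $\X_\infty\!\times\!N$ is the closure of $\X\!\times\!(N\cap\Xi)$ for closed neighborhoods $N$ of $\Si^\circ$ and that $\widetilde f_\infty$ commutes with closures. Comparing this intersection with the last form of definition \eqref{whatelse} applied to $\vert f\vert$ over the family $\D'$ then produces $\limsup_{\D'}\vert f\vert$.

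I expect no genuine obstacle, since every step instantiates one already performed. The hinge is the triviality of the action on $\Si^\circ$, the analog of step $(d)$; but, as noted, this is handed to us by the inclusion $\Si^\circ\subset\Si^\bu$ together with Lemma~\ref{tinguiririca}. The only bookkeeping care required is that the two coronas differ: the relevant fixed-point set is now $\Si^\circ$, strictly smaller than $\Si^\bu$, so the extension-restriction decoration $\,{}^\circ\,$ is taken relative to $\Si_{\mathrm I}$ rather than $\Si$, and the traces $D\cap\Xi$ are nonvoid by density of $\Xi$ in $\widetilde\Si$. Once this is kept straight, the formulas transcribe directly.
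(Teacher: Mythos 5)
Your proposal is correct and follows exactly the route the paper intends: its proof of Theorem \ref{dragalit} is the one-line remark that one makes ``minor obvious modifications'' to the proofs of Theorem \ref{dagorlat} and Corollaries \ref{dragorlat} and \ref{gondolin}, the main point being that the elements of $\Si^\circ$ are still fixed points of $\th$. You correctly identify this as the hinge (via $\Si^\circ\subset\Si^\bu$ and Lemma \ref{tinguiririca}) and carry out the substitutions $\J\rightsquigarrow\mathrm I$, $\Si^\bu\rightsquigarrow\Si^\circ$ faithfully.
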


The proof consists in making minor obvious modifications in the proofs above. The main fact is that the elements of $\Si^\circ$ are still fixed points of the action $\th$.

\begin{Remark}\label{tenebros}
{\rm All the examples of such ideals $\mathrm I$ are provided by the closed subsets $\Si^\circ$ of $\Si^\bu$. Note that the result applies to the same (large) class of symbols. Equality \eqref{biligherer}, compared to \eqref{belegherer}, deals with the distance of the same operator ${\sf Op}(f)$ to a closer ideal, in terms of a smaller number.}
\end{Remark}

\section{The standard case}\label{stindarde}

We give now more details concerning the ideal $\mathrm C_0(\Xi)$\,, which is the one usually considered.
For this smallest admissible $C^*$-subalgebra ${\rm C}_0(\Xi)=:\J_{\rm st}$\, (which is also an ideal of $\mathrm{BC_u}$), one refers to $\widetilde \J_{\rm st}\equiv{\rm VO}(\Xi)$ as to the $C^*$-algebra of {\it vanishing oscillation} (or {\it slowly oscillating}) functions on $\Xi$\,. Note that in this standard case one has simply $\Si_{\rm st}^\bu\!=\widetilde\Si_{\rm st}\!\setminus\!\Xi$\,. Set $\widetilde{\mathfrak L}_{\rm st}={\rm L^1}(\Xi;\widetilde{\J}_{\rm st})$\,; at the level of crossed products it follows that ${\sf Sch}:\widetilde{\mathfrak{J}}_{\rm st}\to \mathbf{B}[{\rm L}^2(\Xi)]$ maps the ideal $\mathfrak J_{\rm st}=\Xi\!\ltimes \!{\rm C}_0$ onto the ideal $\mathbf{K}[{\rm L}^2(\Xi)]$ of compact operators. From this fact and diagram (\ref{bigorneau}), it is immediate that ${\sf Op}(\mathfrak B_{\rm st})=:\mathbf B_{\rm st}=\mathbf{K}(\mathscr{H})$\,. As $\widetilde{\mathbf B}_{\rm st}\ll \mathbf{B}(\mathscr{H})$\,, we conclude that $\mathbf{B}^\bu_{\rm st}$ is a $C^*$-subalgebra of $\mathbf B(\mathscr{H})/\mathbf{K}(\mathscr{H})$\,, the {\it Calkin algebra}. In this case the spectrum ${\rm sp}_{\mathbf B_{\rm st}^\bu}\equiv{\rm sp}_{\rm ess}$ is {\it the essential spectrum}, having well known equivalent descriptions.

\smallskip
So one can apply the results of the preceding section in this more explicit setting. But it is possible in this case to make the $G$-lemma more concrete, in terms of a usual asymptotic behavior (but replacing the equality with a lower bound).
We start with a lemma, recalling that $\S$ already denoted the family of all the complements of relatively compact subsets of $\Xi$\,. It is a filter on $\Xi$\,; in the proof we will use some elementary facts about filters and ultrafilters.

\begin{Lemma}
For every $\psi\in \widetilde{\J}_{\rm st}$ one has
$$
\bigcap_{S\in \mathcal{S}}\overline{\psi(S)}\subset \widetilde{\psi}\big(\Si_{\rm st}^\bu\big)\,.
$$
\end{Lemma}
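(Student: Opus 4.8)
The plan is to prove the stated (nontrivial) inclusion by a filter-theoretic argument, exploiting the compactness of $\widetilde\Si_{\rm st}$ and the continuity of the Gelfand extension $\widetilde\psi$. Fix $\lambda\in\bigcap_{S\in\S}\overline{\psi(S)}$; unwinding the definition, this says that for every relatively compact $K\subset\Xi$ and every $\epsilon>0$ the set $\psi^{-1}\big(B(\lambda,\epsilon)\big)\cap S$ is non-void, where $S=K^c\in\S$ and $B(\lambda,\epsilon)$ denotes the open ball of radius $\epsilon$ centered at $\lambda$. The first step is to observe that the family $\mathcal G:=\{\psi^{-1}(B(\lambda,\epsilon))\mid\epsilon>0\}\cup\S$ has the finite intersection property: the preimages of balls are nested as $\epsilon$ decreases, $\S$ is a filter (hence closed under finite intersections), and so a finite intersection always reduces to one of the non-void sets $\psi^{-1}(B(\lambda,\epsilon))\cap S$. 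Consequently $\mathcal G$ generates a filter on $\Xi$, which I extend to an ultrafilter $\mathcal U$.

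Next I would invoke compactness. Viewing $\Xi$ as a dense subset of the compact space $\widetilde\Si_{\rm st}$, the ultrafilter $\mathcal U$ converges to a unique point $\sigma\in\widetilde\Si_{\rm st}$. The crucial claim is that $\sigma$ lies in the corona $\Si^\bu_{\rm st}=\widetilde\Si_{\rm st}\setminus\Xi$: since $\mathcal U$ refines $\S$, it contains the complement of every relatively compact subset of $\Xi$; were $\sigma\in\Xi$, local compactness would furnish a relatively compact neighborhood $V$ of $\sigma$ in $\Xi$ (still a neighborhood in $\widetilde\Si_{\rm st}$, because $\Xi$ is open there), whose complement belongs to $\mathcal U$, contradicting convergence of $\mathcal U$ to $\sigma$. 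Hence $\sigma\in\Si^\bu_{\rm st}$.

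Finally I would identify the value at $\sigma$. By continuity of $\widetilde\psi$ at $\sigma$, the image of $\mathcal U$ under $\widetilde\psi$ converges to $\widetilde\psi(\sigma)$; and since $\widetilde\psi$ restricts to $\psi$ on $\Xi$, this is precisely the limit of $\psi$ along $\mathcal U$. On the other hand $\psi^{-1}(B(\lambda,\epsilon))\in\mathcal U$ for every $\epsilon>0$, so $\psi$ along $\mathcal U$ also converges to $\lambda$. Uniqueness of limits in the Hausdorff space $\mathbb C$ then forces $\widetilde\psi(\sigma)=\lambda$, whence $\lambda\in\widetilde\psi\big(\Si^\bu_{\rm st}\big)$, which is the asserted inclusion.

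I expect the main obstacle to be the careful verification that the limit point $\sigma$ lands in the corona rather than in $\Xi$: this is exactly where the defining property of $\S$ (escaping every relatively compact set) interacts with the topological relationship between $\Xi$ and its compactification $\widetilde\Si_{\rm st}$ (openness, local compactness, and density). Everything else is routine bookkeeping with the finite intersection property and with the continuity of $\widetilde\psi$.
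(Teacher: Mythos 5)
Your argument is correct and follows essentially the same route as the paper's own proof: both pass to an ultrafilter refining $\mathcal S$ along which $\psi\to\lambda$, use compactness of $\widetilde\Si_{\rm st}$ to obtain a limit point, show it must lie in the corona because the ultrafilter escapes every relatively compact set, and conclude by continuity of $\widetilde\psi$. Your explicit construction of the generating family via preimages of balls and the verification of the finite intersection property is just a more hands-on version of the paper's appeal to the existence of an ultrafilter witnessing a cluster value.
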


\begin{proof}
 For $\lambda\in \bigcap_{S\in \mathcal{S}}\overline{\psi(S)}$ (the cluster set of $\psi$ under the filter $\S$), there is an ultrafilter $\mathcal{U}$ finer than $\mathcal{S}$ such that $\lim_{\mathcal{U}}\psi=\lambda$\,. Set $j:\Xi\to \widetilde{\Si}_{\rm st}$ for the canonical injection. Then, $j(\mathcal{U})$ is a filter base on $\widetilde{\Si}_{\rm st}$ and $\lim_{j(\mathcal{U})}\widetilde{\psi}=\lambda$\,. Thus, for every ultrafilter $\mathcal{U}'$ on $\widetilde{\Si}_{\rm st}$ finer than the filter defined by $j(\mathcal{U})$\,, one has $\lim_{\mathcal{U}'}\widetilde{\psi}=\lambda$\,. Moreover, as $\widetilde{\Si}_{\rm st}$ is compact, every ultrafilter on it is principal, i.e. it corresponds to the filter of neighborhoods of some $\si\in \widetilde{\Si}_{\rm st}$\,. As $\mathcal{S}$ is a free filter, $\mathcal{U}$ does not have a limit on $\Xi$, hence the accumulation points of $j(\mathcal{U})$ do not intersect $\Xi$\,. This implies that $\si$, the limit of $\mathcal{U}'$, does not belong to $\Xi$\,, and as in the standard case one has $\Si_{\rm st}=\Xi$\,, then $\si\in \Si^\bu_{\rm st}$\,. Hence $\lambda=\widetilde{\psi}(\si)\in \widetilde{\psi}\big(\Si_{\rm st}^\bu\big)$\,.
\end{proof}

For simplicity of notations, we stated and proved this lemma for functions $\psi$ only defined on $\Xi$\,. The same arguments, involving the family $\S'\!:=\{\X_\infty\}\!\times\!\S$ (a filter in the compact space $\X_\infty\!\times\!\widetilde{\Si}$\,)\,, lead to the formula
$$
\bigcap_{S\in \S}\overline{f(X\!\times\!S)}\subset f^\bu_\infty\big(\X_\infty\!\times\!\Si_{\rm st}^\bu\big)\,,\quad\forall\,f\in\mathrm C_0\big(\X\!\times\!\widetilde\Si\big)\supset\mathbb F_{(1)}^{-1}\,\widetilde{\mathfrak L}\,,
$$
making an obvious connection with Theorem \ref{dagorlat}. Instead of \eqref{golumm} one gets
\begin{equation*}\label{phisic}
{\rm sp}_{\rm ess}\big({\sf Op}(f)\big)\supset\bigcap_{S\in \mathcal S}\overline{f\big(\X\!\times\!S\big)}
\end{equation*}
and a $G$-lemma replacing Corollary \ref{gondolin} is
\begin{equation}\label{balegar}
{\rm dist}\big({\sf Op}(f),\mathbf K(\mathscr H)\big)\ge\limsup_{\mathcal S'}\vert f\vert=\min_{x\in \X}\limsup_{\xi \to \infty}|f(x,\xi)|\,.
\end{equation}

The last two formulas, although just inequalities, present the advantage that they are expressed in terms of the behavior of the symbol on the familiar complements of compact sets. Actually the $\limsup$ is also a standard concept here, as seen in \eqref{balegar}.

\smallskip
It is a known fact that if $\mathfrak{A}_1\ll\mathfrak{A}_2$ are $C^*$-algebras and $a\in \mathfrak{A}_1$ has an inverse $b\in \mathfrak{A}_2$\,, then $b\in \mathfrak{A}_1$\,. On the other hand, Atkinson's Theorem \cite[Proposition 3.3.11]{Pedersen} states that an element in $\mathbf{B}(\mathscr{H})$ is {\it a Fredholm operator} if and only if its projection into the Calkin algebra is invertible. In other words, an element ${\sf Op}(f)\in \widetilde{\mathbf{B}}_{\rm st}$ is a Fredholm operator if and only if ${\sf Op}^\bu(f)$ is invertible in $\mathbf{B}^\bu_{\rm st}$. As Theorem \ref{dagorlat} gives an explicit form of the spectrum of ${\sf Op}^\bu(f)$\,, we obtain a criterion to determine when ${\sf Op}(f)$ is a Fredholm operator. As mentioned in Remark \ref{frustiuk}, if $\X$ is not compact, then for every symbol $f$ we have $\Tilde{f}_\infty\!\left(\{x_\infty\}\!\times\! \widetilde{\Si}\right)=\{0\}$\,, thus $0\in {\rm sp}_{\rm ess}\big({\sf Op}(f)\big)$\,. We only state a weak form of a conclusion, using the familiar asymptotic quantity:

\begin{Corollary}\label{siasta}
If $\,\X$ is not compact, for every  symbol $f\in\mathbb F_{(1)}^{-1}\widetilde{\mathfrak L}_{\rm st}$\,, the operator ${\sf Op}(f)$ is not Fredholm. On the other hand, if $\,\X$ is compact, then ${\sf Op}(f)$ is a Fredholm operator if 
$$
\min_{x\in \X}\liminf_{\xi \to \infty}|f(x,\xi)|>0\,.
$$
\end{Corollary}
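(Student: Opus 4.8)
The plan is to reduce the whole statement to Atkinson's Theorem together with Theorem \ref{dagorlat}. By Atkinson's Theorem, ${\sf Op}(f)$ is Fredholm precisely when its image ${\sf Op}^\bu(f)$ is invertible in the Calkin algebra, equivalently when $0\notin{\rm sp}_{\rm ess}\big({\sf Op}(f)\big)$. Since we are in the standard case, where ${\rm sp}_{{\mathbf B}^\bu_{\rm st}}\equiv{\rm sp}_{\rm ess}$, Theorem \ref{dagorlat} identifies this set explicitly as $f^\bu_\infty\big(\X_\infty\!\times\!\Si^\bu_{\rm st}\big)$. So the entire statement becomes a question of whether $0$ is a value of $f^\bu_\infty$ on $\X_\infty\!\times\!\Si^\bu_{\rm st}$.

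The non-compact case is then immediate. If $\X$ is not compact, the extra point $x_\infty$ is present and, as recalled just before the statement, $\widetilde f_\infty$ vanishes identically on $\{x_\infty\}\!\times\!\widetilde\Si_{\rm st}$, hence $f^\bu_\infty(x_\infty,\si)=0$ for every $\si\in\Si^\bu_{\rm st}$. Thus $0\in{\rm sp}_{\rm ess}\big({\sf Op}(f)\big)$, so ${\sf Op}^\bu(f)$ is not invertible in the Calkin algebra and ${\sf Op}(f)$ is not Fredholm. (Equivalently, when $\X$ is non-compact, $\mathbf B^\bu_{\rm st}\cong{\rm C}_0\big(\X\!\times\!\Si^\bu_{\rm st}\big)$ is non-unital, so by the spectral permanence fact recalled above no element of it can be invertible in the unital Calkin algebra.)

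For the compact case one has $\X_\infty=\X$, so the factor $\{x_\infty\}$ disappears and the task is to show that the hypothesis forces $0\notin f^\bu_\infty\big(\X\!\times\!\Si^\bu_{\rm st}\big)=f^\bu\big(\X\!\times\!\Si^\bu_{\rm st}\big)$. Fix $(x,\si)\in\X\!\times\!\Si^\bu_{\rm st}$. Since $\Si^\bu_{\rm st}=\widetilde\Si_{\rm st}\!\setminus\!\Xi$ and $\Xi$ is open and dense in $\widetilde\Si_{\rm st}$, there is a net $(\xi_\alpha)$ in $\Xi$ converging to $\si$; because $\si\notin\Xi$, this net eventually leaves every compact subset of $\Xi$, i.e. $\xi_\alpha\to\infty$. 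By continuity of $\widetilde f$ we get $f^\bu(x,\si)=\lim_\alpha f(x,\xi_\alpha)$, a cluster value of $f(x,\cdot)$ at infinity, whence $\big|f^\bu(x,\si)\big|\ge\liminf_{\xi\to\infty}|f(x,\xi)|$. Taking the infimum over $\si$ and then the minimum over the compact space $\X$ gives $\min_{(x,\si)}\big|f^\bu(x,\si)\big|\ge\min_{x\in\X}\liminf_{\xi\to\infty}|f(x,\xi)|>0$, so $0\notin f^\bu\big(\X\!\times\!\Si^\bu_{\rm st}\big)$ and ${\sf Op}(f)$ is Fredholm. Here one uses that for compact $\X$ the dual $\Xi$ is discrete and the constant symbol $1$ yields ${\sf Op}(1)=\mathrm{id}_{\mathscr H}\in\widetilde{\mathbf B}_{\rm st}$, so that $\mathbf B^\bu_{\rm st}$ is unital with unit the class of the identity; this is what guarantees that invertibility of ${\sf Op}^\bu(f)$ in $\mathbf B^\bu_{\rm st}$ coincides with invertibility in the Calkin algebra.

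The main obstacle I expect is precisely this last point: matching the unit of $\mathbf B^\bu_{\rm st}$ with that of the Calkin algebra, so that the spectrum computed inside $\mathbf B^\bu_{\rm st}$ (supplied by Theorem \ref{dagorlat}) genuinely governs Fredholmness. Verifying ${\sf Op}(1)=\mathrm{id}_{\mathscr H}$ and $1\in\widetilde{\mathfrak B}_{\rm st}$ in the compact case is the step that has to be handled with care; once it is in place, the non-compact half reduces to the already-recorded vanishing of $\widetilde f_\infty$ at $x_\infty$, and the compact half reduces to the elementary cluster-value inequality above.
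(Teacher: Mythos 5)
Your proof is correct and follows essentially the same route as the paper: Atkinson's Theorem combined with the spectral identification of Theorem \ref{dagorlat}, the vanishing of $\widetilde f_\infty$ on $\{x_\infty\}\times\widetilde\Si_{\rm st}$ for the non-compact case, and the relation between boundary values on $\Si^\bu_{\rm st}$ and cluster values of $f(x,\cdot)$ at infinity for the compact case. You additionally make explicit two points the paper leaves implicit --- that when $\X$ is compact the unit of $\mathbf B^\bu_{\rm st}$ is the class of $\mathrm{id}_{\mathscr H}$ and hence coincides with the unit of the Calkin algebra, and the pointwise inequality $\big|f^\bu(x,\si)\big|\ge\liminf_{\xi\to\infty}|f(x,\xi)|$ obtained by a net argument --- which is a welcome tightening.
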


A sharper but less explicit form (involving equivalence) follows from Corollary \ref{gondolin} applied to this case.

\begin{Remark}\label{fomentez}
{\rm In his work \cite{Gr} V. Grushin also proved a $G$-lemma, only involving a lower bound, for pseudodifferential operators with bounded symbols for $\X=\mathbb R^n$. The symbols used by him are some kind of Hörmander class functions: $f\in {\rm C}^\infty\big(\R^n\!\times \!\R^n\big)$ such that for every $\alpha,\beta,\gamma\in \mathbb N^n$, with $\beta\neq 0$\,, there is a constant $c_{\alpha\gamma}>0$ and a function $h_{\alpha\beta\gamma}\in {\rm C}_0\big(\R^n\big)$ such that 
$$
\left|\frac{\partial ^\alpha}{\partial \xi^\alpha}\frac{\partial^{\beta+\gamma}}{\partial x^{\beta+\gamma}}f(x,\xi)\right|\leq h_{\alpha\beta\gamma}(x)(1+|\xi|)^{-|\alpha|}\,,\hspace{.4cm}\left|\frac{\partial ^\alpha}{\partial \xi^\alpha}\frac{\partial^{\gamma}}{\partial x^{\gamma}}f(x,\xi)\right|\leq c_{\alpha\gamma}(1+|\xi|)^{-|\alpha|}\,.
$$
This presents various differences with respect to our symbols. They require much more stringent regularity. However, due to the condition $\beta\neq 0$\,, the first estimation implies that a Grushin symbol needs no $\textrm C_0$-bahavior in the $x$-variable, as in our case.
On the other hand, the second estimation implies that a Grushin symbol $f$ verifies $f(x,\cdot)\in {\rm VO}(\Xi)$ for every fixed $x$, but they are much more exigent regarding the decaying ratios, involving an infinity of conditions. We do not need smoothness, and when it is present, only one first-order condition in the variable $\xi$ is enough; see also Remark 7.1.}
\end{Remark}

\section{Further examples and remarks}\label{adunate}

\begin{Remark}\label{oftat}
{\rm Consider the  case $\Xi=\R^n$, corresponding to $\X=\R^n$. There is a simple device, to be used below, allowing to construct elements of $\widetilde\J$ from the behavior required in $\J$\,: If a bounded function $\psi:\R^n\to\mathbb C$ is differentiable and each first order derivative $\partial _j\psi$ belongs to $\J$ for some admissible $C^*$-subalgebra on $\R^n$, then $\psi\in \widetilde{\J}$\,. This follows from the gradient Theorem for line integrals:
$$
\psi(\xi+\zeta)-\psi(\xi)=\int_{0}^{1}\!\nabla \psi(\xi+t\zeta)\cdot \zeta \,dt\,.
$$}
\end{Remark}

We finish this study by visiting some non-standard examples illustrating the constructions. We focus on the couple $\big(\J,\widetilde{\J}\,\big)$\,, which then generates the entire formalism.
Let $\big\{\J_i\,\big\vert\,i\in I\big\}$ be a family of admissible (non-unital) $C^*$-subalgebras. It is trivial to check that $\bigcap_{i}\!\J_i$ is once again admissible and $\widetilde{\bigcap_{i}\!\J_i}=\bigcap_{i}\!\widetilde\J_i$\,. Such intersections can be used to model complex behavior of symbols. All the following examples are not just $C^*$-subalgebras of $\mathrm{BC_u}(\mathbb{R}^n)$, but also ideals, most of which can be regarded as the ideal of bounded uniformly continuous functions which vanish with respect to some filter.

\begin{Example}\label{stoskan}
{\rm This is just a warm up, since it can be covered in various ways from subsequent examples. For $\Xi=\X=\mathbb R$\,, let us consider the ideal
$$
\J_+\!=\Big\{\phi\in {\rm BC}_{\rm u}(\R)\,\Big\vert\,\lim_{\xi\to +\infty}\phi(\xi)=0\Big\}\,,
$$
which is clearly admissible and much larger than ${\rm C}_0=\J_{\rm st}$\,.
Thus, if the derivative $\psi'$ decays to zero at $+\infty$ and is bounded and uniformly continuous (a very generous condition, to be satisfied ``to the left"), the function $\psi$ belongs to $\widetilde{\J}$\,. For instance, its restriction to an interval $(a,+\infty)$\,, with $a$ large, could be
$$
\psi(\xi)=\sin\big[\beta(\xi)\big]\,,\quad{\rm with}\quad|\beta'(\xi)|\underset{+\infty }{\to}0\,.
$$
Examples as $\beta(\xi):=\xi^\alpha$ where $\alpha<1$ are available (as many others), and this is very far from requiring convergence to some limit. This and the very weak conditions towards $-\infty$ prove that the vanishing oscillation functions with respect to certain filters may form quite large classes of functions.}
\end{Example} 

\begin{Example}\label{rradial}
{\rm If $\o_0\in \mathbb S^{n-1}\subset \R^n=\Xi$\,,  it is easy to see that
$$
\J_{\o_0}=\Big\{\psi\in {\rm BC}_{\rm u}\big(\R^n\big)\,\Big\vert\,\lim_{\xi\to\o^\infty_0}\psi(\xi)=0\Big\}
$$
is an admissible ideal. The meaning of $\lim_{\xi\to\o^\infty_0}\psi(\xi)=0$ is that for every $\epsilon>0$ there exist $V\in\mathcal V(\o_0)$ and $R>0$ such that $|\psi(\xi)|<\epsilon$ if $\xi/|\xi|\in V$ and $|\xi|>R$\,. This can also be interpreted differently if one places $\o_0$ at infinity (on the boundary of a radial compactification, for instance). Note that the previous example corresponds to $n=1$ and $\o_0=1$\,.

\smallskip
Consider for instance $\Xi=\mathbb{R}^3$, $\omega_0=(1,0,0)$ and a function $\psi:\mathbb{R}^3\to \mathbb C$ such that on the space $\xi_1\geq 1$ it is given by $\psi\big(\xi_1,\xi_2,\xi_3\big)=\cos\big(\xi_1^{-2}\cos(\xi_2+\xi_3)\big)$\,. Then 
$$
\nabla \psi\big(\xi_1,\xi_2,\xi_3\big)=\xi_1^{-2}\sin\big(\xi_1^{-2}\cos(\xi_2+\xi_3)\big)\sin(\xi_2+\xi_3)\big(-2\xi_1^{-3} ,1,1\big)\,,
$$ 
which clearly verifies $\partial_{\xi_1}\psi,\partial_{\xi_2}\psi,\partial_{\xi_3}\psi\in \J_{\o_0}$\,, thus $\psi\in \widetilde{\J}_{\o_0}$ by Remark \ref{oftat}.}
\end{Example}

\begin{Example}\label{pesado}
{\rm To make it more general, one could start with a regular dynamical compactification $\O$ of $\Xi$ (that is $\Xi$ is open and dense in $\O$ and the action by translations extend to $\O$). Choose $\O_0$\,, a closed invariant subset of the boundary $\O\!\setminus\!\Xi$\, (an orbit closure, for instance). We define $\mathcal U_\O(\O_0)$ to be the family of neighborhoods of $\O_0$ in $\O$\,, set
$$
\J_{\O_0}=\big\{\psi\in {\rm BC}_{\rm u}\big(\Xi\big)\,\big\vert\,\forall\,\epsilon>0\,,\,\exists\,U\in\mathcal U_\O(\O_0)\ {\rm s.\,t.}\ |\psi(\xi)|<\epsilon\ {\rm if}\ \xi\in U\cap\Xi\big\}\,,
$$
and check that is an admissible ideal.  Note that Example \ref{rradial} corresponds to consider $\O$ to be the {\it radial compactification} of $\mathbb R^n$ and $\O_0=\{\o_0^\infty\}$\,.}
\end{Example}

\begin{Example}\label{pescado}
{\rm Let $E$ be a subset of $\Xi$ such that $EK\ne\Xi$ for every compact $K$. This means that $E$ is {\it not syndetic}. The set $E=\cup_{n\in\N}[a_n,b_n]$\,, composed of disjoint closed intervals of $\Xi=\mathbb{R}$\,, is syndetic if and only if the lengths of the gaps $\alpha_n:=a_{n+1}-b_n$ is bounded. The numbers with prime absolute value in $\Xi=\mathbb Z$ are not syndetic. We define an admissible ideal $\J_E$ of bounded uniformly continuous functions on $\Xi$ by the extra condition
$$
\forall\,\epsilon>0\,,\ \exists\,K\subset\Xi\ {\rm compact\ \,s.t.}\ |\psi(\xi)|<\epsilon\,\ {\rm if}\ \xi\notin EK.
$$
In such a situation we write $\lim_{[E]}\psi=0$\,. Setting $\Xi=\R$ and $E=(-\infty,a]$ for some real $a$ leads to Example \ref{stoskan}, while any non-void interval $[a,b]$ serves to recover the standard case.}

\smallskip
{\rm Let again $\Xi=\R^n$. Chose $E$ to be any non compact submanifold of dimension $m<n$\,. Such $E$ fulfills the previous condition. In this case, we can give generic examples of functions belonging to the ideal $\J_E$\,.  For $\xi\in E$, let $N_\xi$ be the normal space to $E$ at the point $\xi$\,. Pick then some orthonormal base $\{\eta_{\xi,1},\eta_{\xi,2},\dots,\eta_{\xi,n-m}\}$\,. Then $\varphi\in \J_E$ follows from the condition
\begin{equation}\label{cafe}
    \lim_{s\to \pm\infty}\left\{\sup_{\xi\in E}\max_{1\leq i\leq n-m}\varphi\big(\xi+s\eta_{\xi,i}\big)\right\}=0\,.
\end{equation}
Moreover, this implies that if $\psi$ is differentiable and 
$$
\lim_{s\to \pm\infty}\max_{1\leq j\leq n}\left\{\sup_{\xi\in E}\max_{1\leq i\leq n-m}\partial_j\psi(\xi+s\eta_{\xi,i})\right\}=0\,,
$$
then $\psi\in \widetilde{\J}_E$\,. Set for instance $E:=\{\xi\in \mathbb{R}^2: \xi=(t,t^2),\, t\in \mathbb{R}\}$ the graph of the function $\mathbb{R}\ni t\mapsto t^2$.  At any point $(t,t^2)$ the normal space is generated by $(-2t,1)$. Then, one may rewrite condition (\ref{cafe}) as
$$
\lim_{s\to \pm\infty}\sup_{t\in \mathbb{R}}\varphi\left(\left(t-\frac{2ts}{\sqrt{1+4t^2}},t^2+\frac{s}{\sqrt{{1+4t^2}}} \right)\right)=0\,,
$$
which means that for every $\epsilon>0$ there is $\hat{s}>0$ such that outside the set 
$$
\left\{\left(t-\frac{2ts}{\sqrt{1+4t^2}},t^2+\frac{s}{\sqrt{{1+4t^2}}} \right)\,\Big\vert\,t\in \mathbb{R},|s|<\hat{s}\right\}
$$
the function $\varphi$ is $\epsilon$-small.}
\end{Example}

\begin{Example}\label{ceialcezarului}
{\rm Let $\mathbb D:=\big\{D_n\,\big\vert\,n\in\N\big\}$ a family of compact subsets of $\Xi$ satisfying by definition for every $n$\,: (a) $0\ne\wm(D_n)$\,, (b) $D_n\subset D_{n+1}$ and (c) every compact set $L\subset\Xi$ is contained in some $D_n$\,. It follows that $\cup_nD_n=\Xi$ (the group $\Xi$ is $\si$-compact) and $\lim_n\wm(D_n)=\infty$\,. We say that $A\subset\Xi$ is {\it $\mathbb D$-full}, and we write $A\in\mathcal B_\mathbb D$ if it is closed and $\underset{n\to\infty}{\lim}\!\frac{\wm(A\cap D_n)}{\wm(D_n)}=1$\,.
It is useful to note that $\mathcal B_\mathbb D$ is stable under finite intersections, translations by elements $\zeta\in\Xi$ and that it contains all the complements of relatively compact subsets. In addition, if $A\in\mathcal B_\mathbb D$ and $K$ is relatively compact, then $A\!\setminus\!K\in\mathcal B_\mathbb D$\,. 

\smallskip
One defines 
\begin{equation}\label{babel}
\J_\mathbb D:=\big\{\psi\in {\rm BC_u}\,\big\vert\,\forall\,\epsilon>0\,,\,\exists\,A\in\mathcal B_\mathbb D\,\ {\rm s.t.}\ \,|\psi(\xi)|\le\epsilon\ \,{\rm if}\ \,\xi\in A\big\}\,.
\end{equation}
By using the stated properties of the family $\mathcal B_\mathbb D$ one easily shows that $\J_\mathbb D$ is an admissible ideal of ${\rm BC_u}$\,. Slightly formally, the extra condition reads $\psi|_B\in{\rm C}_0(B)$ for some $B\in\mathcal B_\mathbb D$ depending on $\psi$\,. Anyhow, it is easy to give examples: just choose a $\mathbb D$-full subset $A$\,, impose the $\mathrm C_0$-condition on $A$ and let $\psi$ be arbitrary outside $A$\,, up to boundedness and uniform continuity. Then, in $\R^n$, one may use Remark \ref{oftat} to indicate non-trivial examples of elements of the $C^*$-algebra $\widetilde{\J}_\mathbb D$\,. In $\Xi=\R$ one can use balls $D_n\!:=\{\xi\!\mid\!|\xi|\le n\}$\,, and then $\J_\mathbb D$ is connected with the concept of {\it Ces\'aro limit}; see \cite[Lemma 4.1]{GI}: their elements are characterized by the condition
$$
\lim_n\frac{\int_{D_n}\!|\psi|}{\wm(D_n)}=0\,.
$$
The reader may try various examples of infinite unions of closed finite intervals as potential elements of $\mathcal B_\mathbb D$\,.
Belonging to $\J_\mathbb D$ is decided in terms of the lengths and the gaps involved.}
\end{Example}

\bigskip
{\bf Acknowledgements.} We are very thankful to S. Richard for his comments and suggestions, which allowed for an improvement of the results we present here.

\bigskip
{\bf Financial support.}
N. J. has been supported by ANID, Beca de Doctorado Nacional 21220105. M. M. has been supported by the Fondecyt Project 1200884. 


\bigskip
ADDRESS

\smallskip
N. Jara:

Facultad de Ciencias, Departamento de Matem\'aticas, Universidad de Chile 

Las Palmeras 3425, Casilla 653, Santiago, Chile.

E-mail: {\it nestor.jara@ug.uchile.cl}






\begin{thebibliography}{00}

\bibitem{DR} A. Dasgupta and M. Ruzhansky: \emph{Gohberg Lemma, Compactness, and Essential Spectrum of Operators on Compact Lie Groups}, J. d'Analyse Math. {\bf 128}(1), 179--190, (2016).

\bibitem{GI} V. Georgescu and A. Iftimovici: \emph{Riesz–Kolmogorov Compactness Criterion, Lorentz Convergence and Ruelle Theorem on Locally Compact Abelian Groups}, Potential Analysis, {\bf 20}, 265--284, (2004).

\bibitem{Go} I. Gohberg, \emph{On the Theory of Multidimensional Singular Integral Equations}, Soviet Math. Dokl. {\bf 1}, 960--963, (1960).

\bibitem{Gr} V. Grushin: \emph{Pseudo-differential Operators in $\R^n$ with Bounded Symbols}, Funct. Anal. Appl. {\bf 4}, 202--212, (1970).

\bibitem{Ho} L. H\"ormander: \emph{Pseudodifferential Operators and Hypoelliptic Operators}, in Pseudodifferential Operators [Russian translation], Mir, Moskow, 9--62, (1967).

\bibitem{KN} J. J. Kohn and L. Nirenberg: \emph{An Algebra of Pseudodifferential Operators}, in Pseudodifferential Operators [Russian translation], Mir, Moskow, 63--87, (1967).

\bibitem{M} M. M\u antoiu: {\it Compactifications, Dynamical Systems at Infinity and the Essential Spectrum of Generalized Schr\"odinger Operators}, J. reine angew. Math. {\bf 550}, 211--229, (2002).

\bibitem{Ma} M. M\u antoiu: {\it Abelian $C^*$-Algebras which Are Independent with Respect to a Filter}, J. London Math. Soc. {\bf 71}, 740--758, (2005).

\bibitem{Man} M. M\u antoiu: {\it Anisotropic Gohberg Lemmas for Pseudodifferential Operators on Abelian Compact Groups}, Preprint arXiv:2210.02568 (2022).

\bibitem{MW} S. Molahajloo and M. W. Wong. \emph{Ellipticity, Fredholmness and Spectral Invariance of Pseudo-differential Operators on $S^1$}, J. Pseudo-Differ. Oper. Appl., {\bf 1}(2), 183--205, (2010).

\bibitem{Ro} J. Roe: \emph{Lectures on Coarse Geometry}, University Lecture Series 31. Providence, Rhode Island: American Mathematical Society, 2003.

\bibitem{Pedersen} G. K. Pedersen: \emph{Analysis now}. Graduate Texts in Mathematics, 118. Springer-Verlag, New York, 1989.

\bibitem{RT1} M. Ruzhansky and V. Turunen: {\it Pseudodifferential Operators and Symmetries}, Pseudo-Differential Operators: Theory and Applications {\bf 2}, Birkh\"auser Verlag, 2010. 

\bibitem{RT2} M. Ruzhansky and V. Turunen:  {\it Global Quantization of Pseudo-differential Operators on Compact Lie Groups, SU(2) and 3-Sphere}, Int. Math. Res. Not. IMRN, {\bf 11}, 2439--2496, (2013).

\bibitem{RVR} M. Ruzhansky and J.\,P. Velasquez Rodriguez: \emph{Non-Harmonic Gohberg's Lemma, Gershgoshin Theory and Heat Equation on Manifolds with Boundary}, Math. Nachr. {\bf 294}, 1783--1820, (2021).

\bibitem{VR} J.\,P. Velasquez Rodriguez: \emph{On Some Spectral Properties of Pseudo-differential Operators on $\T$}, J. Fourier Analysis Appl., (2019).

\bibitem{VR1} J. P. Velasquez-Rodriguez: \emph{Spectral Properties of Pseudo-differential Operators over the Compact Group of p-adic Integers and Compact Vilenkin Groups}, Preprint arXiv:1912.11407 (2019).

\bibitem{Se} R.\,T. Seeley: \emph{Integro-differential Operators on Vector Bundles}, Matematika, {\bf 11}(2), 57--97, (1967).

\bibitem{Wi} D. Williams: {\it Crossed Products of $C^*$-Algebras}, Mathematical Surveys and Monographs, {\bf 134}, American Mathematical Society, 2007.

\end{thebibliography}
\end{document}